\renewcommand\@biblabel[1]{}
\numberwithin{equation}{section}
\newcommand{\beq}{\begin{equation}}
\newcommand{\eeq}{\end{equation}}
\newcommand{\beqs}{\begin{eqnarray*}}
\newcommand{\eeqs}{\end{eqnarray*}}
\newcommand{\beqn}{\begin{eqnarray}}
\newcommand{\eeqn}{\end{eqnarray}}
\newcommand{\beqa}{\begin{array}}
\newcommand{\eeqa}{\end{array}}
\def\lra{\longrightarrow}
\def\bc{\begin{center}}
\def\ec{\end{center}}
\def\begeq{\begin{equation}}
\def\endeq{\end{equation}}
\def\and{\quad{\rm and}\quad}
\let\lra=\longrightarrow
\def\mapright\#1{\,\smash{\mathop{\lra}\limits^{\#1}}\,}
\newtheorem{prop}{Proposition}[section]
\newtheorem{theo}[prop]{Theorem}
\newtheorem{lem}[prop]{Lemma}
\newtheorem{cor}[prop]{Corollary}
\begin{document}

\date{}
\author{Gang  ${\rm Tian}^*$ \ \ \ \ \ \ Xiaohua  ${\rm Zhu}^{**}$}

\thanks {* Partially supported by NSFC and NSF Grants}

\thanks {** Partially supported by the NSFC Grants 11271022 and 11331001}
 \subjclass[2000]{Primary: 53C25; Secondary:  53C55,
 58J05}
\keywords {Conic K\"ahler-Einstein metrics, complex Monge-Amp\`ere equation, Ricci flow}

\address{Gang Tian, SMS and BICMR, Peking
University, Beijing, 100871, China, and Department of Mathematics, Princeton
University,  New Jersey,  NJ 02139, USA\\ tian@math.princeton.edu}

\address{Xiaohua Zhu, SMS and BICMR, Peking
University, Beijing 100871, China\\
xhzhu@math.pku.edu.cn}

\title{  Properness of log $F$-functionals}

\begin{abstract}
In this paper, we apply the method developed in [Ti97] and [TZ00] to proving the properness of log $F$-functional on any conic K\"ahler-Einstein manifolds. 
As an application, we give an alternative proof for the openness of the continuity method through conic K\"ahler-Einstein metrics.
\end{abstract}
 \maketitle
	
\tableofcontents

\section *{ 0.  Introduction}
It has been very active to study conic K\"ahler-Einstein metrics in recent years partly because of their use in studying problems in algebraic geometry and
 K\"ahler geometry. For example, they provide a continuity method for establishing the existence of K\"ahler-Einstein metrics on any Fano manifold $M$,
 that is, a compact K\"ahler manifold with positive first Chern class $c_1(M)>0$. Such a continuity is used in the recent solution to Yau-Tian-Donaldson conjecture
 given independently by Tian and Chen-Dondaldson-Sun [Ti12], [CDT13]. The conjecture states that a Fano manifold $M$ admits a K\"ahler-Einstein metric if and only if $M$ is K-stable as defined in [Ti97] and reformulated in [Do02]. The K-stability is closely related to the properness of 
Mabuchi's $K$-energy, or equivalently, the $F$-functional. 
It is proved in [Ti97] that if $M$ admits no non-zero holomorphic vector field, then the existence of K\"ahler-Einstein metrics on $M$ is equivalent 
to the properness of $F$-functional or Mabuchi's $K$-energy. The purpose of this paper is to adapt the arguments in [Ti97] as well as [TZ00]
to show that similar results still hold for conic K\"ahler metrics.

Now let us recall some basics on conic K\"ahler metrics. Let $D$ be a smooth divisor of $M$ with $[D]\in \lambda c_1(M)$ for some  $\lambda>0$ and $S$
be a defining section of $D$. Choose a smooth K\"ahler metric $\omega_0$ with K\"ahler class $[\omega_0]\,=\, 2\pi \,c_1(M)$, then there is 
a Hermitian metric $H_0$ on $[D]$ whose curvature is $\omega_0$. Following computations in [Au84] and [Di88] (also see [Ti87], [DT92]), 
Jeffres-Mazzeo-Rubinstein   introduced
a log $F$-functional on the space of K\"ahler potentials  [JMR11]:
$$\mathcal H(M, \omega_0)=\{ \psi\in C^\infty(M)|~\omega_{\psi}=\omega_0+\sqrt{-1}\partial\bar\partial\psi>0\}.$$ 
This log $F$-functional is an Eular-Langrange energy of conic K\"ahler-Einstein metrics with cone angle $2\pi\beta$ along $D$ and is defined by 
(also see  [LS12])
\begin{align}\label{log-functional} F_{\omega_0,\mu}(\psi)\,=\, J_{\omega_0}(\psi) \,-\,\frac{1}{V}\int_M\,\psi\,\omega^n_0\,-\,\frac{1}{\mu}\log\left(\frac{1}{V}\,
\int_M  \,\frac{1}{|S|_{H_0}^{2\beta}}\,e^{h_0-\mu\psi}\,\omega^n_0\right),
\end{align}
where $\mu\,=\,1-(1-\beta)\lambda\in (0,1)$, $V\,=\,\int_M\,\omega_0^n$ and $h_0$ is the Ricci potential of $\omega_0$ defined by
$${\rm Ric}(\omega_0)\,-\,\omega_0\,=\,\sqrt{-1}\,\partial\bar\partial \,h_0,~~~~\int_M\,\left(e^{h_0}\,-\,1\right)\,\omega_0^n\,=\,0.$$
Note that $J_{\omega_0}(\phi)$ is defined by (see [Au84], [Ti87])
\begin{align}
J_{\omega_0}(\psi)\,=\,\frac{1}{V}\sum_{i=0}^{n-1}\frac{i+1}{n+1}
\int_M\sqrt{-1}\partial\psi\wedge\bar\partial\psi\wedge\omega_0^{i}\wedge\omega_{\psi}^{n-i-1}.\notag
\end{align}

The main result of this paper is the following

\begin{theo}\label{tian-zhu-conic-ke-metric}   
Let $D$ be a smooth divisor of a Fano manifold $M$ with $[D]\in \lambda c_1(M)$ for some $\lambda>0$ such  that there is no non-zero holomorphic field
which is tangent to $D$ along $D$. \footnote{This condition can be removed if $\lambda\ge 1$ by a result in  [Be11], or [SW12].}
Suppose that there exists  a conic  K\"ahler-Einstein metric on $M$ with  cone angle $2\pi\beta\in (0,2\pi)$ along $D$. 
Then there exists two uniform constants $\delta$  and $C$ such that
\begin{align}\label{proper-inequality}F_{\omega_0,\mu}(\psi)\,\ge\, \delta \,I_{\omega_0}(\psi)\, -\,C,~~~\forall~\psi \in \mathcal H(M,\omega_0),
\end{align}
where $I_{\omega_0}(\psi)\,=\,\frac{1}{V}\int_{M}\,\phi\,(\omega_0^n\,-\,\omega_{\psi}^n)$.
\end{theo}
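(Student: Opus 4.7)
The plan is to adapt the strategy of [Ti97] and [TZ00] to the conic setting, reducing the properness inequality to a sharp Moser--Trudinger-type inequality at the given conic K\"ahler--Einstein metric $\omega_{KE}$. Fix $\omega_{KE}=\omega_0+\sqrt{-1}\partial\bar\partial\psi_{KE}$ solving
\[
\mathrm{Ric}(\omega_{KE})=\mu\,\omega_{KE}+(1-\beta)\lambda [D].
\]
Given any $\psi\in\mathcal{H}(M,\omega_0)$, I would interpolate between $\omega_\psi$ and $\omega_{KE}$ by a twisted continuity path $\omega_{\phi_t}$, $t\in [0,\mu]$, defined by
\[
\omega_{\phi_t}^n=\frac{e^{h_0-t\phi_t+(\mu-t)\psi}}{|S|_{H_0}^{2(1-\beta)\lambda}}\,\omega_0^n,
\]
so that $\phi_0=\psi$ (up to normalization) and $\phi_\mu=\psi_{KE}$ up to a constant. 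The existence of the conic K\"ahler--Einstein metric is used to guarantee that the path can be continued all the way to $t=\mu$, invoking the by-now standard regularity theory of [JMR11] and the hypothesis on holomorphic vector fields to ensure invertibility of the linearized operator at $t=\mu$.

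The main analytic steps are then: (i) compute $\frac{d}{dt}F_{\omega_0,\mu}(\phi_t)$ along the path and show, as in [Ti97], that the derivative has a definite sign, yielding the monotonicity
\[
F_{\omega_0,\mu}(\psi)\geq F_{\omega_0,\mu}(\psi_{KE})+\text{(positive correction)};
\]
(ii) establish the conic analogue of the Moser--Trudinger inequality at $\omega_{KE}$, namely
\[
\log\!\left(\frac{1}{V}\int_M e^{-\mu(\psi-\psi_{KE})}\,\omega_{KE}^n\right)\leq (1-\delta)\bigl(\mu J_{\omega_{KE}}(\psi-\psi_{KE})-\tfrac{\mu}{V}\textstyle\int (\psi-\psi_{KE})\omega_{KE}^n\bigr)+C,
\]
which is equivalent to $F_{\omega_0,\mu}(\psi)\geq \delta I_{\omega_0}(\psi)-C$ after a cocycle manipulation of $F$ under the change of reference metric from $\omega_0$ to $\omega_{KE}$; (iii) identify the constant $\delta>0$ with the spectral gap $\lambda_1(\omega_{KE})-\mu$, which is strictly positive under the non-existence hypothesis on holomorphic vector fields tangent to $D$, arguing as in [TZ00] where an analogous eigenvalue characterization was used for the modified $K$-energy near a Kähler--Ricci soliton.

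The principal obstacle I expect is the analytic one: both the monotonicity computation in (i) and the spectral estimate in (iii) must be carried out with respect to a metric that is genuinely singular along $D$. This forces the use of conic H\"older spaces (as in Donaldson and Jeffres--Mazzeo--Rubinstein) and a careful integration by parts argument that avoids the divisor. Concretely, I would regularize by replacing $|S|_{H_0}^{2(1-\beta)}$ with $(|S|_{H_0}^2+\epsilon)^{1-\beta}$ and derive uniform-in-$\epsilon$ estimates, then pass to the limit. The second delicate point is establishing the eigenvalue estimate $\lambda_1(-\Delta_{\omega_{KE}})>\mu$ on the conic manifold: the equality case gives rise to a holomorphic vector field (via the Matsushima--Lichnerowicz argument adapted to conic setting), which by the hypothesis must vanish, yielding the strict inequality needed to extract the positive $\delta$ in \eqref{proper-inequality}. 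Once these conic analytical ingredients are in place, the global structure of the proof parallels closely the smooth case treated in [Ti97], [TZ00].
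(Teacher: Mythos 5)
Your broad outline (continuity path, comparison to the minimum at the conic KE metric, use of $(|S|^2+\epsilon)^{1-\beta}$ regularization and the JMR regularity theory) correctly anticipates several ingredients, but the core mechanism that extracts the coercivity constant $\delta$ in the paper is absent, and the ingredient you substitute for it — the spectral gap — does not do the job.

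Your step (iii) proposes to identify $\delta$ with $\lambda_1(\omega_{KE})-\mu$. This cannot work directly, and it is not how [Ti97] or [TZ00] proceed. The spectral gap $\lambda_1>\mu$ (which follows from the absence of holomorphic vector fields via a Matsushima-type argument, and is indeed used in the paper to show openness of the continuity set $E$) only controls the second variation of $F$ \emph{at} the critical point, i.e.\ it gives \emph{local} strict minimality of $\omega_{KE}$ in an $L^2$ sense. It does not yield a global inequality of the form $F\ge\delta I-C$ for all $\psi\in\mathcal H(M,\omega_0)$; passing from local convexity near $\omega_{KE}$ to coercivity at infinity is precisely the nontrivial content of the theorem. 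Likewise, your step (ii) — "establish the conic analogue of the Moser–Trudinger inequality at $\omega_{KE}$" — is, after the cocycle manipulation you describe, a restatement of the inequality you are trying to prove rather than an independent lemma. Step (i) only gives monotonicity of $I-J$ along the backward path, hence $F_{\omega_0,\mu}(\psi)\ge F_{\omega_0,\mu}(\phi)$ (Theorem 2.1 of the paper), not the linear growth in $I_{\omega_0}(\psi)$.

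What the paper actually does, following [Ti97] and [TZ00], is to select a special parameter $t<\mu$ along the continuity path via the balance condition $(\mu-t)^{1+\gamma/2}\|\varphi_t\|_{C^0}=\epsilon$; then the identity $F_{\omega,\mu}(\psi-\phi)=\frac{1}{\mu}\int_0^\mu(I-J)_{\hat\omega}(\varphi_s)\,ds$ gives a lower bound in terms of $(\mu-t)I_{\hat\omega}(\varphi_\mu)-(\mu-t)\,\mathrm{osc}(\varphi_t-\varphi_\mu)$. The hard part is controlling $\mathrm{osc}(\varphi_t-\varphi_\mu)$ uniformly; this is achieved by running a twisted K\"ahler–Ricci flow (Sections 4--5) starting from the (approximated) $\varphi_t$, proving smoothing estimates for the twisted Ricci potential along the flow, then applying the Implicit Function Theorem near $\varphi_\mu$ to the smoothed potential. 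This entire flow-smoothing mechanism — the technical heart of [Ti97]/[TZ00] — is missing from your proposal. A secondary point: the paper does not regularize inside the KE metric directly; it must simultaneously regularize the conic reference metric $\hat\omega=\omega_\psi$ via Yau's theorem with $L^p$ right-hand side, obtain uniform $C^2$-estimates via the Guenancia–Paun curvature inequality for the approximating metrics $\kappa_\delta$, and only then run the flow; passing $\delta\to0$ before having flow estimates would lose the smoothing. Finally, the linear estimate $F\ge\delta I-C$ itself requires one further step (the Phong–Song–Sturm–Weinkove bootstrap in Section 6) beyond the weaker power-law bound $F\ge\delta I^{1/(8n+9)}-C$, which your proposal also does not address.
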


Combined with a result in [JMR11], Theorem \ref{tian-zhu-conic-ke-metric} implies that there exists a  conic   K\"ahler-Einstein metric on $M$  along
$D$ with cone angle $2\pi\beta\in (0,2\pi)$ if and only if $F_{\omega_0,\mu}(\cdot)$ is proper. This generalizes Tian's theorem in [Ti97] in the case of
smooth K\"ahler-Einstein manifolds. 

As an application of Theorem \ref{tian-zhu-conic-ke-metric} or more precisely, its weaker version Theorem \ref{tian-zhu-conic-ke-metric-weak} in Section 6,  
we give an alternative proof for the openness of the continuity method through conic K\"ahler-Einstein metrics. The proof of such an openness was first 
sketched by Donaldson [Do11] as an application of the $C^{2,\alpha;\beta}$ Schauder estimates Donaldson developed for conic K\"ahler metrics. 
Since the space $C^{2,\alpha;\beta}$ will depend on the cone angles $2\pi\beta$ of metrics, the usual Implicit Function Theorem could not be applied directly to prove the openness. Instead, Donaldson consider a family of linear elliptic operators associated to approximated conic metrics to get a prior Schauder estimates needed
for proving the openness. Our proof is to use the perturbation method first introduced in [Ti12] to approximate conic K\"ahler-Einstein metrics
by smooth K\"ahler metrics, then we apply the Implicit Function Theorem to approximated smooth K\"ahler metrics and take limit (cf. Section 7).  
To assume the limit exists, we need to establish a prior $C^0$ and $C^2$-estimates for the K\"ahler potentials associated to those
approximated metrics. With these a prior estimates, we can take the limit to get a weak conic K\"ahler-Einstein metric. This metric 
is in fact in sense of $C^{2,\alpha;\beta}$ Schauder theory by the regularity theorem in [JMR11].

The proof of Theorem \ref{tian-zhu-conic-ke-metric} is an adaption of that in [Ti97] for smooth K\"ahler-Einstein manifolds.  
In our situation, there are some technical issues we need to make sure. First we need to show how to smooth singular metrics near the conic  
K\"ahler-Einstein metric. We will use a family
of twisted K\"ahler-Ricci flows with initial values given by smooth metrics which approximate conic metrics (see Section 5, 6). 
Then we shall deal with  the local  smoothing behavior of these flows as well as the local convergence of flows when the initial values vary. 
Note that as a parabolic version of twisted K\"ahler-Einstein metric equation, which was first introduced by Song-Tian [ST12],   
the twisted K\"ahler-Ricci flow has been also studied by many people, such as Collins-Szekelyhihi [CS12], Liu-Zhang [LZ14], Liangmin Shen [Sh14] etc..

The organization of this paper is as follows: In Section 1, we recall some basics on conic K\"ahler metrics. 
In Section 2, we prove the lower bound of log $F$-functional $F_{\omega_0,\mu}(\cdot)$. In Section 3,  we introduce a
family of smooth K\"ahler metrics to approximate the conic K\"ahler metrics discussed in Section 2. 
In Section 4, we introduce a family of twisted  K\"ahler-Ricci flows to smooth the approximated metrics in Section 3, then in Section 5, we prove the
local convergence of these flows. Theorem \ref{tian-zhu-conic-ke-metric} will be proved in Section 6.   
In Section 7, we apply Theorem \ref{tian-zhu-conic-ke-metric} to give a proof of the openness for the continuity method through conic K\"ahler-Einstein metrics
which was first given by Donaldson.

\section{Conic K\"ahler metrics}

Let $S$ be a defining function of $D$ and $H_0$ a Hermitian metric on $D$ induced by $\omega_0$. Then  it is easy to see that  $|S|^{2\beta} =|S|_{H_0}^{2\beta}\in C^{2,\alpha;\beta} (M)$  for any $\alpha\in (0,1)$
  in sense of [Do11].

Moreover, one can check that
$\omega^*=\omega_0+\delta\sqrt{-1}\partial\bar{\partial} |S|^{2\beta}$ is a conic K\"ahler metric with cone angle $2\pi\beta$ along $D$,  as long as  the number $\delta$ is  sufficiently small  (cf. [Br11]).
There is an important property of $\omega^*$  shown in [JMR11]
 that the  bisectional holomorphic curvature of  $\omega^*$ is uniformly bounded from above on $M\setminus D$.

Let  $h^*$ be a log Ricci potential of $\omega^*$ defined by
\begin{align}\label{ricci-potential-omega-star}
\sqrt{-1}\partial\bar{\partial}h^*={\rm Ric}(\omega^*)-\mu\omega^*-2\pi(1-\beta)[D].
\end{align}
Then  we have
\begin{align}\label{ricci-potential-relation}
h^*= h_0-\mu\delta |S|^{2\beta}-\log\frac{|S|^{2(1-\beta)}(\omega^*)^n}{\omega_0^n}+const,
\end{align}
where $h_0$ is a Ricci potential of $\omega_0$.  A direct computation shows that  $h^*\in C^{,\gamma;\beta}(M)$, where $\gamma=\min(\frac{2}{\beta}-2,1).$

In general,   a K\"ahler potential of conic K\"ahler metric is not necessary  in $C^{2,\alpha;\beta} (M)$.  But, for a  conic K\"ahler-Einstein metric
 $\omega_{CKE}=\omega_\phi=\omega_0+\sqrt{-1}\partial\bar{\partial}\phi$ with angle $2\pi\beta$ along $D$, $\phi$ should be in   $C^{2,\alpha_0;\beta} (M)$
 for some positive number  $\alpha_0 \le\gamma.$  This is because
$\omega_{CKE}$ satisfies  a  conic K\"ahler-Einstein  metric  equation,
\begin{align}\label{conic-KE-equation}
{\rm Ric}(\omega)-\mu\omega-2\pi(1-\beta)[D]=0.
\end{align}
Then   $\phi$ satisfies  a  non-degenerate  complex Monge-Amp\`ere equation,
 \begin{align}\label{complex-MA-equation-conic}
(\omega^*+\sqrt{-1}\partial\bar\partial\phi)^n=e^{h^*-\mu\phi}(\omega^*)^n.
\end{align}
The  $C^{2,\alpha;\beta}$-regularity theorem established in [JMR11] (also in [GP13]) implies that $ \phi\in C^{2,\alpha_0;\beta} (M)$ for some $\alpha_0\le \gamma$.

For any  positive number $\alpha\le \alpha_0$,  we introduce a  space of  $C^{2,\alpha; \beta}$ K\"ahler potentials by
  \begin{align}&\mathscr{H}^{2,\alpha; \beta}(M,\omega_0)\notag\\
&=\{\psi \in C^{2,\alpha; \beta}(M)|~  \omega_\psi=\omega_0+\sqrt{-1}\partial\bar\partial\psi ~{\rm ~is ~  a ~conic~ Kaehler~  metric~ on} ~M\}.
\notag\end{align}
One can show that both functionals $F_{\omega_0,\mu}(\cdot)$ and $I_{\omega_0}(\cdot)$ are well-defined on $\mathscr{H}^{2,\alpha;\beta}$~~
\newline $(M,\omega_0)$.

\begin{lem}\label{functional-smoothing} For any $\psi\in\mathcal H(M,\omega_0) $, there a sequence of $\psi_\delta\in \mathscr{H}^{2,\alpha;\beta}(M,\omega_0)$ such that
$$ F_{\omega_0,\mu}(\psi)=\lim_{\delta\to 0}  F_{\omega_0,\mu}(\psi_\delta)$$
and
$$I_{\omega_0}(\psi)=\lim_{\delta\to 0} I_{\omega_0}(\psi_\delta).$$
\end{lem}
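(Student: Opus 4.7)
The plan is to take $\psi_\delta := \psi + \delta |S|_{H_0}^{2\beta}$ for small $\delta>0$ and show this one-parameter family does the job. First I would verify $\psi_\delta \in \mathscr{H}^{2,\alpha;\beta}(M,\omega_0)$: since $|S|^{2\beta}\in C^{2,\alpha;\beta}(M)$ and $\omega_\psi$ is a smooth strictly positive Kähler form on all of $M$, for every sufficiently small $\delta>0$ the form
$$\omega_{\psi_\delta} \,=\, \omega_\psi + \delta \sqrt{-1}\partial\bar\partial |S|_{H_0}^{2\beta}$$
is a conic Kähler metric with cone angle $2\pi\beta$ along $D$, by the same Brendle-type computation that was used above to establish conicity of $\omega^*$ (cf.\ the discussion preceding (\ref{ricci-potential-omega-star})). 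Hence $\psi_\delta$ lies in the desired space.

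Next I would dispatch the elementary terms in $F_{\omega_0,\mu}(\psi_\delta)$ and $I_{\omega_0}(\psi_\delta)$. Since $|S|_{H_0}^{2\beta}$ is uniformly bounded on $M$, $\psi_\delta\to\psi$ uniformly, which immediately gives $\tfrac{1}{V}\int_M \psi_\delta\,\omega_0^n\to \tfrac{1}{V}\int_M \psi\,\omega_0^n$. For the logarithmic term I would rewrite
$$\int_M \frac{e^{h_0-\mu\psi_\delta}}{|S|_{H_0}^{2\beta}}\,\omega_0^n \,=\, \int_M \frac{e^{h_0-\mu\psi}}{|S|_{H_0}^{2\beta}}\,e^{-\mu\delta|S|_{H_0}^{2\beta}}\,\omega_0^n,$$
and observe that the integrand is dominated by $C|S|_{H_0}^{-2\beta}e^{h_0-\mu\psi}$, which is $\omega_0^n$-integrable because $\beta\in(0,1)$ makes $|S|^{-2\beta}$ integrable against a smooth volume form. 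Dominated convergence yields the claim for the log term.

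The main obstacle is the convergence of the Aubin functional $J_{\omega_0}(\psi_\delta)$, whose summands involve the singular wedge products $\omega_{\psi_\delta}^{n-i-1}$ that blow up along $D$. The key point is that $\psi+\delta|S|_{H_0}^{2\beta}$ is $\omega_0$-plurisubharmonic, uniformly bounded on $M$, and converges uniformly to $\psi$, so the Bedford-Taylor continuity theorem for wedge products of uniformly convergent bounded psh potentials gives
$$\omega_0^{\,i}\wedge \omega_{\psi_\delta}^{n-i-1}\,\rightharpoonup\, \omega_0^{\,i}\wedge \omega_\psi^{n-i-1}, \qquad 0\le i\le n-1,$$
and in particular $\omega_{\psi_\delta}^n\rightharpoonup \omega_\psi^n$ weakly as measures on $M$. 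Rewriting each summand of $J_{\omega_0}(\psi_\delta)$ by the standard integration-by-parts identity
$$\int_M \sqrt{-1}\partial\psi_\delta\wedge\bar\partial\psi_\delta\wedge \omega_0^{\,i}\wedge\omega_{\psi_\delta}^{n-i-1} \,=\, \int_M \psi_\delta\bigl(\omega_0^{\,i}\wedge\omega_{\psi_\delta}^{n-i}-\omega_0^{\,i+1}\wedge\omega_{\psi_\delta}^{n-i-1}\bigr),$$
the uniform convergence $\psi_\delta\to\psi$ combined with the weak convergence above yields $J_{\omega_0}(\psi_\delta)\to J_{\omega_0}(\psi)$. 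The same argument, in the simpler incarnation $I_{\omega_0}(\psi_\delta)=\tfrac{1}{V}\int_M\psi_\delta(\omega_0^n-\omega_{\psi_\delta}^n)$, delivers $I_{\omega_0}(\psi_\delta)\to I_{\omega_0}(\psi)$, completing the proof.
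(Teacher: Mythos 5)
Your proposal uses exactly the same choice $\psi_\delta=\psi+\delta|S|_{H_0}^{2\beta}$ as the paper, whose entire proof is that one line, and your verification of each term (uniform convergence for the linear term, dominated convergence for the logarithmic term, Bedford--Taylor continuity under uniform convergence of bounded $\omega_0$-psh potentials for $J_{\omega_0}$ and $I_{\omega_0}$) is sound. The only slip is a sign in your integration-by-parts identity: since $\int_M\sqrt{-1}\partial\psi_\delta\wedge\bar\partial\psi_\delta\wedge T=-\int_M\psi_\delta\,\sqrt{-1}\partial\bar\partial\psi_\delta\wedge T$ for closed $T$, the right-hand side should read $\int_M\psi_\delta\bigl(\omega_0^{\,i+1}\wedge\omega_{\psi_\delta}^{\,n-i-1}-\omega_0^{\,i}\wedge\omega_{\psi_\delta}^{\,n-i}\bigr)$, which does not affect the argument.
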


\begin{proof} In fact, one can choose $\psi_\delta=\psi+\delta |S|^{2\beta}$ to verify the lemma.
\end{proof}.

\section{Lower bound of  $F_{\omega_0,\mu}(\cdot)$}

In this section, we  use  the continuity method of Ding-Tian  in  [DT92] ( also see  [Ti97]) to  study   the lower bound of  log $F$-functional  $F_{\omega_0,\mu}(\cdot)$.   This method will be extended to prove
Main Theorem \ref{tian-zhu-conic-ke-metric}  in this paper.
It is worthy to mention  that   the lower bound of  $F_{\omega_0,\mu}(\cdot)$  can be   obtained by using  a general theorem of  Berndtsson for the uniqueness problem of
special K\"ahler potetials  in [Be11].  Berndtsson's method is based on applications of  geodesic theory  about  K\"ahler potentials space studied in  [Se92], [Do98], [Ch00], etc..

\begin{theo}\label{Bern_thm}
Let  $\omega=\omega_{CKE}=\omega_\phi$  be a conic K\"ahler-Einstein metric  on $M$ with cone angle $2\pi\beta$ along $D$. Then $\phi$ obtains the minimum
 of $F_{\omega_0,\mu}(\cdot)$ on $\mathscr{H}^{2,\alpha, \beta}(M,\omega)$.  In particular,
\begin{equation}
F_{\omega_0,\mu}(\psi)\ge-c(\omega_0,\mu),~\forall~ \psi\in\mathscr{H}^{2,\alpha;\beta}(M,\omega_0).
\end{equation}
\end{theo}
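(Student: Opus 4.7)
The plan is to adapt the continuity method of Ding–Tian [DT92], [Ti97] to the conic setting. Since Lemma \ref{functional-smoothing} shows that both $F_{\omega_0,\mu}$ and $I_{\omega_0}$ are continuous under the smoothing $\psi\mapsto\psi+\delta|S|^{2\beta}$, it suffices to prove $F_{\omega_0,\mu}(\psi)\ge F_{\omega_0,\mu}(\phi)$ for every $\psi\in\mathscr{H}^{2,\alpha;\beta}(M,\omega_0)$. Fixing such a $\psi$, I would introduce the one-parameter family of conic complex Monge--Amp\`ere equations
\[
(\omega_0+\sqrt{-1}\partial\bar\partial\phi_t)^n \,=\, \frac{c_t}{|S|^{2(1-\beta)}}\,e^{h_0-\mu t\phi_t-\mu(1-t)\psi}\,\omega_0^n,\qquad t\in[0,1],
\]
where $c_t$ normalizes both sides to equal total volume. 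At $t=1$ this is the conic KE equation in potential form (equivalent to (\ref{complex-MA-equation-conic})), with solution $\phi_1=\phi$; at $t=0$ the right-hand side is prescribed and a $C^{2,\alpha;\beta}$ solution $\phi_0$ is produced by the conic Calabi--Yau theorem. Solvability for all $t\in[0,1]$ should follow from an openness/closedness scheme, using the linearization of the conic MA operator with Donaldson's conic Schauder theory for openness, and the a priori estimates of [JMR11], [GP13] for closedness (the $C^0$-bound being supplied by $\mu<1$).

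The key step is a monotonicity computation along the path. Differentiating the logarithm of the MA equation in $t$ yields a linear equation of the form
\[
\Delta_{\phi_t}\dot\phi_t+\mu t\,\dot\phi_t \,=\, \mu(\phi_t-\psi)+a_t,
\]
with $a_t$ an averaging constant coming from $\dot{\log c_t}$. Combining this with the variational identity $\frac{d}{dt}\bigl(J_{\omega_0}(\phi_t)-\frac{1}{V}\int_M\phi_t\,\omega_0^n\bigr)=-\frac{1}{V}\int_M\dot\phi_t\,\omega_{\phi_t}^n$ and rewriting the log-term in $F_{\omega_0,\mu}$ via the MA equation (so that $\omega_{\phi_t}^n$ replaces $|S|^{-2(1-\beta)}e^{h_0-\mu\phi_t}\omega_0^n$ up to an exponential weight in $(1-t)(\phi_t-\psi)$), a direct computation followed by integration by parts on $M\setminus D$ should yield an identity of the form
\[
\frac{d}{dt}F_{\omega_0,\mu}(\phi_t) \,=\, -\,\frac{1-t}{V}\int_M \bigl(|\nabla\dot\phi_t|_{\phi_t}^{2}+\mu t\,(\dot\phi_t)^{2}\bigr)\,\omega_{\phi_t}^n \,\le\,0.
\]
Integrating from $t=0$ to $t=1$ then gives $F_{\omega_0,\mu}(\psi)\ge F_{\omega_0,\mu}(\phi)$, which is the desired minimality; the uniform lower bound follows with $c(\omega_0,\mu):=-F_{\omega_0,\mu}(\phi)$.

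The principal obstacle is justifying these manipulations across the conic divisor $D$. Two points require care: solvability of $\phi_t$ throughout $[0,1]$ under only the hypothesis that a conic KE exists at $t=1$; and validity of the integration by parts near $D$, where $\omega_{\phi_t}$ degenerates transverse to $D$ and $\dot\phi_t$ lies only in $C^{2,\alpha;\beta}$. Both are routine in the smooth case but require their conic analogues here. The integration by parts can be handled by working on $M\setminus\{|S|<\varepsilon\}$, bounding the boundary contributions using the $C^{2,\alpha;\beta}$-regularity of $\phi_t$ and the integrability of $|S|^{-2(1-\beta)}$, and then letting $\varepsilon\to 0$, mirroring the smooth argument of [Ti97]. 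As indicated in the excerpt, one may alternatively bypass the continuity method entirely by invoking Berndtsson's convexity theorem [Be11] for the Ding functional along weak geodesics in the space of conic K\"ahler potentials (building on [Se92], [Do98], [Ch00]); the criticality of $\phi$ as a conic KE then forces it to be a global minimum.
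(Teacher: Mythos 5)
Your continuity path is, after the change of variables $\varphi_t=\phi_t-\psi$ and $s=\mu t$, exactly the Aubin path $(\hat\omega+\sqrt{-1}\partial\bar\partial\varphi)^n=e^{h_{\hat\omega}-s\varphi}\hat\omega^n$ with base $\hat\omega=\omega_\psi$ that the paper uses, and the openness/closedness ingredients you list (Donaldson's conic Schauder theory, Guenancia--Paun $C^2$-estimates, regularity from [JMR11]) are the right ones. The gap is in how you close the argument. You propose to show $\tfrac{d}{dt}F_{\omega_0,\mu}(\phi_t)\le 0$ and integrate from $t=0$ to $t=1$; but the $t=0$ endpoint of this path is \emph{not} $\psi$. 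At $t=0$ the equation reads $\omega_{\phi_0}^n=c_0|S|^{-2(1-\beta)}e^{h_0-\mu\psi}\omega_0^n$, a Calabi--Yau equation whose solution $\phi_0$ differs from $\psi$ unless $\omega_\psi$ is already the conic KE metric. So the monotonicity, even if the derivative identity you wrote were verified, only yields $F_{\omega_0,\mu}(\phi)\le F_{\omega_0,\mu}(\phi_0)$, and there is no comparison between $F_{\omega_0,\mu}(\phi_0)$ and $F_{\omega_0,\mu}(\psi)$ in your argument. You would need an additional step, and I do not see one that is cheap.

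The paper avoids this entirely by a different concluding mechanism. It works with $F_{\hat\omega,\mu}$ (base $\hat\omega=\omega_\psi$), observes that the log-term vanishes at $t=\mu$ because of the normalization $\int_M e^{h_{\hat\omega}-\mu\varphi_\mu}\hat\omega^n=V$, and uses the Ding--Tian integral identity
\begin{equation*}
\mu\Bigl(J_{\hat\omega}(\varphi_\mu)-\frac{1}{V}\int_M\varphi_\mu\,\hat\omega^n\Bigr)=-\int_0^{\mu}(I-J)_{\hat\omega}(\varphi_s)\,ds\le 0,
\end{equation*}
which is an evaluation at $t=\mu$ alone (the $t$-prefactor kills the $t=0$ contribution, so no knowledge of $\varphi_0$ is needed). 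This gives $F_{\hat\omega,\mu}(\varphi_\mu)\le 0$, and then the \emph{cocycle property} $F_{\omega_0,\mu}(\psi)+F_{\hat\omega,\mu}(\varphi_\mu)=F_{\omega_0,\mu}(\phi)$ yields $F_{\omega_0,\mu}(\psi)\ge F_{\omega_0,\mu}(\phi)$. The cocycle step and the Ding--Tian identity are both absent from your write-up; they are precisely what replaces the (unavailable) endpoint comparison. The monotonicity that is actually used along the path is that of $(I-J)_{\hat\omega}(\varphi_t)$, which supplies the uniform $C^0$-bound $I_{\hat\omega}(\varphi_t)\le(n+1)I_{\hat\omega}(\varphi_\mu)$ needed for closedness -- not the monotonicity of $F_{\omega_0,\mu}$ itself, and not, as you suggest, a bound coming from $\mu<1$. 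Your closing remark that one can instead invoke Berndtsson's convexity along geodesics is correct and is the alternative the paper also mentions.
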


\begin{proof}
For any $\psi\in \mathscr{H}^{2,\alpha; \beta}(M,\omega_0)$,  log Ricci potential of  $\hat\omega=\omega_{\psi}$ is given by
 $$h_{\hat\omega}=-\log\frac{\omega_\psi^n}{(\omega^*)^n} -\mu\psi+h^*+const.$$
 Then $h_{\hat\omega}\in  C^{,\alpha;\beta}(M)$.  We consider the following  complex Monge-Amp\`ere  equations with a parameter $t\in [0,\mu]$:
\begin{equation}\label{backward_MAE}
({\hat\omega}+\sqrt{-1}\partial\bar\partial\varphi)^n=e^{h_{\hat\omega}-t\varphi}{\hat\omega}^n.
\end{equation}
  By the assumption,  there exists  a solution $\varphi_\mu=\phi-\psi+const$ at $t=\mu=1-(1-\beta)\lambda$.   Note that  the kernel of operator $(\Delta_{\omega}+\mu)$ is zero (cf. [Do11]).
Then  by the Donaldson's linear theory for Laplace operators associated to conic  metrics, we can apply  Implicity   Function Theorem to
show that there exists  a $\delta>0$ such that  (\ref{backward_MAE}) is solvable in the space  $\mathscr{H}^{2,\alpha;\beta}(M,\omega_0)$ on  any $t\in (\mu-\delta, \mu]$.

Set
$$E=\{s\in [0,\mu]|~  (\ref{backward_MAE}) \mbox{ is solvable on }~t=s ~{\rm in } ~ \mathscr{H}^{2,\alpha'; \beta}(M,\omega_0) ~{\rm for ~some}~\alpha'\le \alpha \}.$$
We  want  to prove $E=[0,\mu].$
Clearly,  $E$ is non-empty since $(\mu-\delta, \mu]\subset E$.   On the other hand,  it is easy to see that   (\ref{backward_MAE}) are equivalent to Ricci curvature equations,
\begin{align}\label{log-twisted-equation-t}{\rm Ric}(\hat \omega_{\varphi})=t\hat\omega_{\varphi}+ (\mu-t)\hat\omega+ 2\pi (1-\beta)[D], ~t~\in [0,\mu].
\end{align}
Then
 \begin{align}\label{ricci-equation-t}{\rm Ric}(\hat\omega_{\varphi_t})>t\hat\omega_{\varphi_t}, ~{\rm in}~ M\setminus D.
\end{align}
Thus  the first non-eigenvalue of $\Delta_t$ is strictly  bigger than $t$ [JMR11],
where  ${\Delta}_t$ is the Laplace operator associated to $\omega_t$  and ${\omega}_t=\hat\omega_{\varphi_t}=\hat{\omega}+\sqrt{-1}\partial\bar\partial\varphi_t$.
 It follows that  the kernel of operator $(\Delta_{t}+t)$ is zero on  any $t\in E$.   By  Implicity   Function Theorem,  $E$ is  an  open set.
 It  remains  to prove that   $E$ is  also a closed set.  This is related to apriori estimates for solution $\varphi_t$ of   (\ref{backward_MAE}) on  $t\in E$ below.

First we deal with the  $C^0$-estimate.  We may assume that $t\ge \delta$   by the implicity theorem   since  (\ref{backward_MAE}) is solvable  at  $t=0$ [JMR11].
By a direct computation,  we have
\begin{align}\nonumber
 \frac{d}{dt}(I_{\hat\omega}(\varphi_t)-J_{\hat{\omega}}(\varphi_t))&=-\frac{1}{V}\int_M\varphi_t{\Delta}_t\dot{\varphi_t}{\omega}_t^n\\ \nonumber
 &=\frac{1}{V}\int_M({\Delta}_t\dot{\varphi_t}+t\dot{\varphi_t}){\Delta}_t\dot{\varphi_t}{\omega}_t^n.\nonumber
\end{align}
  Note that
$${\Delta}_t\dot{\varphi_t}=-t\dot{\varphi_t}-\varphi_t$$
 by differentiating $\int_M e^{h_{\hat{\omega}}-t\varphi}\hat{\omega}^n=V$.  By the fact that  the first non-eigenvalue of $\Delta_t$ is strictly  bigger than $t$, we get
$$\frac{d}{dt}(I_{\hat{\omega}}(\varphi_t)-J_{\hat{\omega}}(\varphi_t))\ge 0.$$
 This means that   $I_{\hat{\omega}}(\varphi_t)-J_{\hat{\omega}}(\varphi_t)$ is  increasing in $t$. Thus
 $$I_{\hat{\omega}}(\varphi_t)\le (n+1)I_{\hat{\omega}}(\varphi_\mu)\le C.$$
By using the Green formula [JMR11],  we derive
$${\rm osc}(\varphi_t)\le C.$$

To get the $C^2$-estimate, we rewrite (\ref{backward_MAE}) as
\begin{equation}\label{backward_MAE-2}
(\omega^*+\sqrt{-1}\partial\bar\partial\varphi^*)^n=e^{h^*-t\varphi^*-(\mu-t)(\psi-\delta |S|^{2\beta})  } (\omega^*)^n,
\end{equation}
where $\varphi^*=\varphi-\delta |S|^{2\beta}+\psi$ and $h^*$ is the log Ricci potential of $\omega^*$ as in (\ref{ricci-potential-omega-star}).
Since ${\rm Ric}(\omega_{\varphi})>0$,  by  the Chern-Lu inequality [Cher68], [Lu68],   we have
\begin{align}\label{Chern-Lu inequality-1}
\Delta_t \log{\rm  tr}_{  \omega_{t}}(\omega^*)\ge -a(\omega^*)\, {\rm  tr}_{  \omega_{t}}(\omega^*), ~{\rm in}~M\setminus D,
\end{align}
where $a=a(\omega^*)$ is a uniform constant  which depends only on the upper bound of bisectional holomorphic curvature of  $\omega^*$,  and so  it
depends only on  $\omega_0$ and the divisor $D$.  Set
$$u=\log{\rm  tr}_{  \omega_{t}}(\omega^*) -(a+1)\varphi^*.$$
Then  there exists a uniform constant  $C=C(\sup_M  \phi, \sup_M  \psi)$ such that
$$\Delta_t  u\ge e^{u-C(a+1)}-n(a+1).$$
By the maximum principle as in [JMR11],  it follows
\begin{align}\label{upper-bound-phi-1}\omega_t\ge C^{-1}\omega^*.
\end{align}
By (\ref{backward_MAE-2}), we also get
\begin{align}\label{lower-bound-phi-1}\omega_t\le C \omega^*.
\end{align}

Once (\ref{upper-bound-phi-1}) and (\ref{lower-bound-phi-1}) hold,   we can apply  the $C^{2,\alpha;\beta}$-regularity theorem  in  [JMR11] to  show
$$\|\varphi\|_{C^{2,\alpha';\beta}(M)}\le C$$
 for some $\alpha'\le \alpha$. Thus   $\varphi_t\in  \mathscr{H}^{2,\alpha'; \beta}(M,\omega_0)$.  This implies that $E$ is a closed set and so $E=[0,\mu]$.

 By a direct computation as in [DT92], we have
 \begin{align}\label{integral-dt} t\left(J_{\hat{\omega}}(\varphi_\mu)-\frac{1}{V}\int_M\varphi_\mu\hat{\omega}^n\right)=-\int_0^t  (I-J)_{\hat{\omega}}(\varphi_s)ds\le 0, ~\forall~t\le \mu.
\end{align}
Note that  $\int_M e^{h_{\hat{\omega}}-\mu\varphi_\mu}\hat{\omega}^n=V$.  Thus
$$F_{\hat{\omega},\mu}(\varphi_\mu)=J_{\hat{\omega}}(\varphi_\mu)-\frac{1}{V}\int_M\varphi_\mu\hat{\omega}^n\le 0.$$
By  the cocycle condition  of  log  $F$-functional,  it follows
\begin{align}
F_{\omega,\mu}({\psi-\phi})=-F_{\hat{\omega},\mu}(\varphi_\mu)\ge 0.\notag
\end{align}
Again by  the cocycle condition,   we get
\begin{align}\label{cocycle-condition}
F_{\omega_0,\mu}(\psi)=F_{\omega,\mu}(\psi-\phi)+F_{\omega_0,\mu}(\phi)\ge F_{\omega_0,\mu}(\phi).
\end{align}
 Hence we prove that  $F_{\omega_0,\mu}(\cdot)$ takes the minimum at  $\phi$.
\end{proof}

\begin{cor}\label{Bern_thm-coro}
Suppose that there exists   a conic K\"ahler-Einstein metric  on $M$ with  cone angle $2\pi\beta$ along $D$.  Then
\begin{align}
F_{\omega_0,\mu}(\psi)\ge-c(\omega_0,\mu),~\forall~ \psi\in\cup_{0<\alpha'\le \alpha_0}\mathscr{H}^{2,\alpha';\beta}(M,\omega_0).\notag
\end{align}
\end{cor}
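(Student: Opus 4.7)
The plan is to observe that Corollary \ref{Bern_thm-coro} is essentially a repackaging of Theorem \ref{Bern_thm}, once one notes that the bound produced there is independent of the H\"older exponent $\alpha \in (0,\alpha_0]$ used to define the ambient space. Concretely, inspection of the proof of Theorem \ref{Bern_thm} shows that the lower bound on $F_{\omega_0,\mu}$ comes via the cocycle identity \eqref{cocycle-condition} in the form $F_{\omega_0,\mu}(\psi)\ge F_{\omega_0,\mu}(\phi)$, where $\phi$ is the fixed conic K\"ahler--Einstein potential. Thus one may take $c(\omega_0,\mu):=-F_{\omega_0,\mu}(\phi)$, a constant depending only on $\omega_0$, $\mu$ and the (unique up to automorphism) conic K\"ahler--Einstein metric, but not on the regularity class in which $\psi$ is chosen.

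With this observation, the plan is simply: given $\psi\in\cup_{0<\alpha'\le\alpha_0}\mathscr{H}^{2,\alpha';\beta}(M,\omega_0)$, select some $\alpha'\in(0,\alpha_0]$ with $\psi\in\mathscr{H}^{2,\alpha';\beta}(M,\omega_0)$, and invoke Theorem \ref{Bern_thm} with $\alpha$ taken to be this $\alpha'$. This yields $F_{\omega_0,\mu}(\psi)\ge -c(\omega_0,\mu)$ with the same constant on the right-hand side. Taking the union over $\alpha'$ gives the corollary.

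The only point to check is that Theorem \ref{Bern_thm} really can be applied in each space $\mathscr{H}^{2,\alpha';\beta}(M,\omega_0)$ with $\alpha'\le\alpha_0$, which amounts to recalling that the conic K\"ahler--Einstein potential $\phi$ lies in $C^{2,\alpha_0;\beta}(M)\subset C^{2,\alpha';\beta}(M)$ by the regularity theorem of [JMR11] quoted right after \eqref{complex-MA-equation-conic}, so $\phi\in\mathscr{H}^{2,\alpha';\beta}(M,\omega_0)$ and the continuity method along \eqref{backward_MAE} used in Theorem \ref{Bern_thm} carries through verbatim. There is no genuine analytic obstacle here; the corollary is a bookkeeping statement whose role is to free later arguments from having to fix a single H\"older exponent in advance.
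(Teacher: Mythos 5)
Your argument is correct and is exactly the (implicit) reasoning the paper intends: the corollary carries no separate proof there precisely because the constant $-c(\omega_0,\mu)=F_{\omega_0,\mu}(\phi)$ produced by Theorem \ref{Bern_thm} via the cocycle identity \eqref{cocycle-condition} does not depend on the H\"older exponent, and Theorem \ref{Bern_thm} applies for every $\alpha\le\alpha_0$. Your remaining check --- that $\phi\in C^{2,\alpha_0;\beta}(M)\subset C^{2,\alpha';\beta}(M)$ for $\alpha'\le\alpha_0$, so the continuity method along \eqref{backward_MAE} can be launched in each $\mathscr{H}^{2,\alpha';\beta}(M,\omega_0)$ --- is the one point that needed to be said, and you said it.
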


\section{Approximation of conic K\"ahler  metrics}

In this section, we construct approximated smooth K\"ahler potentials  of solution $\varphi_t$ of (\ref{backward_MAE}) on  each $t\in (0,\mu)$ by solving certian complex Monge-Amp\`ere equation.
 First we shall smooth the conic metric
 $\hat \omega=\omega_{\psi}$.  Note
$$(\hat\omega)^n=f_0\omega_0^n, $$
where $f_0=g\frac{1}{|S|^{2-2\beta}}$  for some  $L^\infty$-function $g$.   In particular,   $f_0$ is a $L^p$-function.

Take a family of smooth functions $f_\delta$ with $\int_M f_\delta\omega_0^n=\int_M \omega_0^n$ such that $f_\delta$ converge to $f_0$ in  $L^p$ as $\delta\to 0$.
Then by  the Yau's solution to Calabi's  problem,
there are a family of  K\"ahler potentials $\Psi_\delta$, which  solve equations $(\delta>0)$,
$$(\omega_0+\sqrt{-1}\partial\bar\partial\Psi_{\delta})^n=f_\delta\omega_0^n.$$
By  the Kolodziej's  H\"older estimate  [Kol08], $\Psi_{\delta}$  converge to $\psi$ in  the $C^\alpha$-norm modulo constants as $\delta\to 0.$
For simplicity, we set  $\omega_\delta=\omega_0+\sqrt{-1}\partial\bar\partial\Psi_{\delta}.$

We  modify  (\ref{log-twisted-equation-t}) by a family of Ricci curvature equations with parameter $\delta\in (0,\delta_0]$ for each  $t\in [0,\mu]$,
\begin{align}\label{twisted-equation-t}{\rm Ric}(\omega^\delta_{\varphi_{\delta}})=t\omega^\delta_{\varphi_{\delta}}+ (\mu-t)\omega_\delta+ (1-\beta)\eta_\delta,
\end{align}
where $\omega^\delta_{\varphi_{\delta}}=\omega_\delta+\sqrt{-1}\partial\bar\partial\varphi_{\delta}$ and $\eta_\delta=\lambda\omega_0+\sqrt{-1}\partial\bar\partial \log(\delta+|S|^2).$
(\ref{twisted-equation-t}) are in fact a family of twisted K\"ahler-Einstein metric eqautions  associated to  positive $(1,1)$-forms $\Omega= (\mu-t)\omega_\delta+ (1-\beta)\eta_\delta$ [ST12].
One can check that (\ref{twisted-equation-t}) are equivalent to the following  complex Monge-Amp\`ere equations,
\begin{equation}\label{smooth-continuity-modified}
(\omega_\delta+\sqrt{-1}\partial\bar\partial\varphi_\delta)^n=e^{h_\delta-t\varphi_\delta}\omega_\delta^n,
\end{equation}
where $h_\delta$ are  twisted Ricci potentials of $\omega_\delta$ defined by
\begin{align}\label{twisted-h-delta}\sqrt{-1}\partial\bar\partial h_\delta= {\rm Ric}(\omega_\delta)-\mu\omega_\delta- (1-\beta)\eta_\delta.
\end{align}
We shall study the solutions of  (\ref{smooth-continuity-modified})  and their convergence as $\delta\to 0.$

Rewrite   (\ref{twisted-equation-t})  as
\begin{align}\label{twisted-equation-t-2}{\rm Ric}(\omega^\delta_{\varphi_{\delta}})=t\omega^\delta_{\varphi_{\delta}}+(\mu-t)\omega_0+ (1-\beta)\eta_\delta  +(\mu-t)\sqrt{-1}\partial\bar\partial  \Psi_{\delta}.
\end{align}
 Then  equations (\ref{smooth-continuity-modified})  are equivalent to
\begin{equation}\label{twisted-MA-equation}
(\omega_0+\sqrt{-1}\partial\bar\partial\hat\varphi_\delta)^n=\frac{1}{(\delta+|S|^2)^{1-\beta}}e^{h_0-t\hat\varphi_\delta -(\mu-t)\Psi_\delta}\omega_0^n,~~t\in[0,\mu],
\end{equation}
where  $\hat \varphi_{\delta}= \hat \varphi_{t, \delta}=\varphi_{t, \delta} + \Psi_{\delta}$.

As in [Ti12],  for a fixed $\delta>0$,  we define a family of twisted F-functionals with parameter  $t\in (0,\mu ]$ as follows,
 \begin{align}\label{twisted-f-functional-t}
F_{t, \delta}(\varphi)=J_{\omega_0}(\varphi)-\frac{1}{V}\int_M\varphi\omega^n_0-\frac{1}{t}\log
\left(\frac{1}{V}\int_Me^{\hat h_\delta-t\varphi}\omega_0^n\right),
\end{align}
where
\begin{align}
\hat h_\delta=h_0-(1-\beta)\log (\delta+|S|_0^2)+(t-\mu)\Psi_{\delta}+C_\delta,~~\int_M(e^{\hat h_\delta}-1)\omega^n_0=0.\notag
\end{align}
Then all $F_{t, \delta}(\cdot)$ are proper for  any  $t\in (0,\mu ), \delta\in (0,\delta_0]$ since  log $F$-functionals $F_{\omega_0, t}(\cdot)$ defined in (\ref{log-functional})  are  proper   for  any  $t\in (0,\mu )$. The latter follows from a result in  [LS12]  by  using the fact that
$F_{\omega_0, \mu }(\cdot)$  is bounded from below according to Theorem  \ref{Bern_thm}.  By the Green formula,  we get
  $${\rm osc}_{M}\hat \varphi_{t,\delta} \le C(I_{\omega_0}(\hat \varphi_{t,\delta})+1) \le C',~\forall ~t\in (0,\mu ), $$
where the constants $C,C'$ depend only on $t$.   Note that  all higher order estimates
for solutions  $\hat \varphi_{t,\delta}$ depend only on $\delta$ and  their $C^0$-norm.  Thus  by using the continuity method as in the proof of Theorem  2.5 in [Ti12],
(\ref{twisted-MA-equation}), and so (\ref{smooth-continuity-modified})  are solvable on  any  $t\in (0,\mu), \delta\in (0,\delta_0]$.

Next we improve   higher order estimates for solutions  $\hat \varphi_{t,\delta}$  to show  that they are independent of $\delta>0$.   Let's  introduce a family of smooth  K\"ahler potentials $\Phi^\beta_\delta$ ($\delta >0)$ constructed by
Guenancia-Paun   in [GP13].    Such  $\Phi^\beta_\delta$ have property:

1) $\Phi^\beta_\delta$ converge to   $\Phi^\beta_0=k|S|^{2\beta}$  as $\delta\to 0$ in  sense of H\"older-norm.

2) Let
$$h_{\kappa_\delta}=-\log\frac{\kappa_\delta^n}{\omega_0^n} -\Phi^\beta_\delta+h_0$$
   be  Ricci potentials of $\kappa_\delta=\omega_0+ \sqrt{-1}\partial\bar\partial \Phi^\beta_\delta$.  Then
$\frac{1}{\delta+|S|^{2\beta}} e^{h_\kappa}$ is uniformly bounded on $\delta$.

3) The  bisectional holomorphic curvatures  $R_{\delta i\bar ij\bar j}$ of  $\kappa_\delta$ satisfy: for any K\"ahler metric
$\omega_{\phi+ \Phi^\beta_\delta}=\kappa_\delta+\sqrt{-1}
\partial\bar\partial\phi$, it holds
\begin{align}\label{gp-inequality} &\sum_{i<j} (\frac{1+\phi_{ i\bar i}} {1+\phi_{ j\bar j}} +\frac{1+\phi_{ j\bar j}} {1+\phi_{ i\bar i}}  -2)R_{\delta i\bar ij\bar j} -C_0{\rm tr}_{\kappa_\delta}(\omega_{\phi+ \Phi^\beta_\delta})
 \Delta_{ \omega_{\phi+ \Phi^\beta_\delta}} \Phi^\beta_\delta\notag\\
&+ \Delta_{ \kappa_{\delta}}  \log(\frac{\kappa_\delta^n}{\omega_0^n}\times (\delta+|S|^2)^{1-\beta})\notag\\
&\le C \sum_{i<j} (\frac{1+\phi_{ i\bar i}} {1+\phi_{ j\bar j}} +\frac{1+\phi_{j\bar j}}{1+\bar\phi_{ i\bar i}}) +C {\rm tr}_{\kappa_\delta}(\omega_{\phi+ \Phi^\beta_\delta })\times {\rm tr}_{\omega_{\phi+ \Phi^\beta_\delta}}(\kappa_\delta)+C,
\end{align}
where $C_0$ and $C$ are two uniform constants.

The following is about uniform aprior $C^2$-estimate for $\varphi= \varphi_{t,\delta}$.

\begin{lem}\label{c2-estimate-twisted-metrics-lemma} For any  $t\in (0,\mu),\delta\in(0,\delta_0]$,  it holds
\begin{align}\label{c2-estimate-twisted-metrics}
C^{-1}  \kappa_\delta\le \omega^\delta_{\varphi_{t,\delta}} \le C \kappa_\delta.
\end{align}
Here $C$ is a uniform constant  which depends only on the metric  $\hat\omega$ and $t$.

\end{lem}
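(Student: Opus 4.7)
My plan is to adapt a standard Aubin--Yau/Chern--Lu type trace estimate, but with $\kappa_\delta$ (rather than $\omega_0$ or $\omega^*$) playing the role of the reference metric. The point is that $\kappa_\delta$ is the only smooth metric in sight whose ``curvature defect'' is controlled uniformly in $\delta$ via the Guenancia--Paun inequality (\ref{gp-inequality}), and that is exactly what one needs in order to absorb the terms that would otherwise blow up as $\delta\to 0$.

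First I would rewrite the equation with $\kappa_\delta$ as the background. Setting
$$\phi_\delta\,=\,\Psi_\delta\,+\,\varphi_{t,\delta}\,-\,\Phi^\beta_\delta,$$
one has $\kappa_\delta+\sqrt{-1}\partial\bar\partial\phi_\delta=\omega^\delta_{\varphi_{t,\delta}}$, and (\ref{twisted-MA-equation}) takes the form
$$(\kappa_\delta+\sqrt{-1}\partial\bar\partial\phi_\delta)^n\,=\,F_{t,\delta}\,\kappa_\delta^n,$$
where, thanks to properties (1) and (2) of $\Phi^\beta_\delta$, the density $F_{t,\delta}=\frac{1}{(\delta+|S|^2)^{1-\beta}}e^{h_0-t\hat\varphi_{t,\delta}-(\mu-t)\Psi_\delta}\cdot\omega_0^n/\kappa_\delta^n$ is bounded from above and below uniformly in $\delta$: the singular factor $(\delta+|S|^2)^{\beta-1}$ is exactly absorbed into $\kappa_\delta^n/\omega_0^n$ up to $e^{\Phi^\beta_\delta-h_{\kappa_\delta}}$, while the $C^0$--bound on $\hat\varphi_{t,\delta}$ (established right before the lemma) together with the Kolodziej H\"older bound on $\Psi_\delta$ controls the exponent. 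Thus $\phi_\delta$ is uniformly bounded.

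Next I would apply the trace maximum principle to the auxiliary function
$$u\,=\,\log\,{\rm tr}_{\kappa_\delta}\bl\omega^\delta_{\varphi_{t,\delta}}\br\,-\,A\,\phi_\delta$$
for a constant $A$ to be chosen large. The standard calculation in normal coordinates simultaneously diagonalising $\kappa_\delta$ and $\omega^\delta_{\varphi_{t,\delta}}$ produces, after using the Monge--Amp\`ere equation to rewrite $\Delta_{\kappa_\delta}\log(\kappa_\delta^n/\omega_0^n)$, exactly the left-hand side of (\ref{gp-inequality}). Applying that inequality upper bounds the bad curvature and log-density contributions by $C\sum_{i<j}(\cdots)$ plus a multiple of ${\rm tr}_{\kappa_\delta}(\omega^\delta_{\varphi_{t,\delta}})\cdot {\rm tr}_{\omega^\delta_{\varphi_{t,\delta}}}(\kappa_\delta)$, and the ``good'' term $A\,\Delta_{\omega^\delta_{\varphi_{t,\delta}}}\phi_\delta=A(n-{\rm tr}_{\omega^\delta_{\varphi_{t,\delta}}}(\kappa_\delta))$ contributed by the correction $-A\phi_\delta$ is used to dominate these. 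Choosing $A$ larger than $C_0+C$ (and larger than $t$, which is fixed), at a maximum point of $u$ one obtains
$${\rm tr}_{\omega^\delta_{\varphi_{t,\delta}}}(\kappa_\delta)\,\le\, C'$$
with $C'$ independent of $\delta$. Combined with the uniform lower bound on $F_{t,\delta}$ (equivalently, ${\rm det}$ of the relative endomorphism) this gives ${\rm tr}_{\kappa_\delta}(\omega^\delta_{\varphi_{t,\delta}})\le C$, i.e.\ the upper estimate $\omega^\delta_{\varphi_{t,\delta}}\le C\kappa_\delta$; the reverse inequality then follows immediately from the Monge--Amp\`ere equation and the uniform two-sided bound on $F_{t,\delta}$.

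The principal obstacle, and the reason a direct application of the Chern--Lu inequality as in (\ref{Chern-Lu inequality-1}) is not sufficient, is that $\kappa_\delta$ does not have a uniform upper bound on its bisectional curvature as $\delta\to 0$ in any naive sense near $D$. The whole point of the Guenancia--Paun construction in property (3) is that the relevant \emph{combination} of curvature terms and log-density Laplacian is controlled uniformly, and this is precisely the combination the maximum principle computation produces. A secondary issue is the $\delta$-dependence in the right-hand side of (\ref{twisted-MA-equation}) via $(\mu-t)\Psi_\delta$, but because this term enters only through $C^0$-bounds on the exponent, which are already provided by Kolodziej, it does not affect the argument.
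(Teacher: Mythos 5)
You have reproduced the paper's proof in all essentials: the same substitution $\bar\varphi=\varphi_{t,\delta}+\Psi_\delta-\Phi^\beta_\delta$ making $\kappa_\delta$ the reference metric, the same reliance on the Guenancia--Paun inequality (\ref{gp-inequality}) to absorb the curvature and log-density contributions that a naive Chern--Lu/Aubin--Yau estimate cannot control uniformly in $\delta$, and the same maximum principle applied to $\log {\rm tr}_{\kappa_\delta}(\omega^\delta_{\varphi_{t,\delta}})$ corrected by the potentials. The only point where your justification is looser than the paper's is the term $(\mu-t)\Psi_\delta$: it enters the Yau computation through $\Delta_{\kappa_\delta}\Psi_\delta$, which is \emph{not} controlled by $C^0$ bounds, and the paper disposes of it via $\Delta_{\omega^\delta_{\varphi_\delta}}\Psi_\delta\ge \Delta_{\kappa_\delta}\Psi_\delta/{\rm tr}_{\kappa_\delta}(\omega^\delta_{\varphi_\delta})-nB\,{\rm tr}_{\omega^\delta_{\varphi_\delta}}(\kappa_\delta)$ (using $B\kappa_\delta+\sqrt{-1}\partial\bar\partial\Psi_\delta\ge 0$) -- that is, precisely by keeping $\Psi_\delta$ inside the auxiliary function, as your $-A\phi_\delta$ does.
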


\begin{proof} Let $\bar\varphi=\bar\varphi_\delta=\hat\varphi_\delta-\Phi^{\beta}_\delta$. Then (\ref{twisted-MA-equation}) are  equivalent to
\begin{equation}\label{twisted-MA-equation-3}
(\kappa_\delta+\sqrt{-1}\partial\bar\partial\bar\varphi)^n=\frac{1}{(\delta+|S|_0^2)^{1-\beta}} e^{h_{\kappa_\delta}-t\bar\varphi -(\mu-t)\Psi_{\delta} -(1-t)\Phi^\beta_\delta }\kappa_\delta^n,~~t\in (0,\mu_0).
\end{equation}
Following Yau's  $C^2$-estimate in [Yau78], we have
\begin{align}\label{c2-yau} &  -\Delta_{\omega_{\varphi_\delta}^\delta} \log {\rm tr}_{\kappa_\delta} (  \omega^\delta_{\varphi_{\delta}} )\notag\\
&\le\frac{1}{{\rm tr}_{\kappa_\delta}  (  \omega^\delta_{\varphi_{\delta}} ) }\sum_{i<j} (\frac{1+\bar\varphi_{ i\bar i}} {1+\bar\varphi_{ j\bar j}} +\frac{1+\bar\varphi_{ j\bar j}}{1+\bar\varphi_{ i\bar i}} -2)R_{\delta i\bar ij\bar j} \notag\\
&+\frac{1}{{\rm tr}_{\kappa_\delta}  (  \omega^\delta_{\varphi_{\delta}} )} \Delta_{ \kappa_{\delta}}( t\bar\varphi +(\mu-t)\Psi_{\delta} +(1-t)\Phi^\beta_\delta - \bar h_{\kappa_\delta} ).
\end{align}
  On the other hand,  by
$$B\kappa_\delta + \sqrt{-1}\partial\bar\partial \Psi_{\delta}\ge 0,$$
it is easy to see
\begin{align}\Delta_{   \omega^\delta_{\varphi_{\delta}} } \Psi_{\delta}\ge \frac{  \Delta_{ \kappa_{\delta}} \Psi_{\delta}  }  {{\rm tr}_{\kappa_\delta}    (  \omega^\delta_{\varphi_{\delta}} )}
-nB {\rm tr}_{ \omega^\delta_{\varphi_{\delta}}}(\kappa_\delta).\notag
\end{align}
 Using the fact
\begin{align}\label{laplace-h}\Delta_{ \kappa_{\delta}} h_0 \ge  -A,
\end{align}
we get
\begin{align} &\frac{1}{{\rm tr}_{\kappa_\delta} (   \omega^\delta_{\varphi_{\delta}} )} \Delta_{ \kappa_{\delta}}( t\bar\varphi +(\mu-t)\Psi_{\delta} +(1-t)\Phi^\beta_\delta  - \bar h_{\kappa_\delta} ) -\Delta_{ \omega^\delta_{\varphi_{\delta}}  } \Psi_{\delta} \notag\\
&\le \frac{1}{{\rm tr}_{\kappa_\delta} (    \omega^\delta_{\varphi_{\delta}})} \Delta_{ \kappa_{\delta}}  \log(\frac{\kappa_\delta^n}{\omega_0^n}\times (\delta+|S|^2_0)^{1-\beta})
+\frac{n(1-t) +A}{{\rm tr}_{\kappa_\delta} (    \omega^\delta_{\varphi_{\delta}})}
+t +nB{\rm tr}_{   \omega^\delta_{\varphi_{\delta}}}(\kappa_\delta).
\notag
\end{align}
Thus   by  the Guenancia-Paun inequality (\ref{gp-inequality}) for metrics   $\omega^\delta_{\varphi_{\delta}}$,  we deduce from (\ref{c2-yau}),
\begin{align}& - \Delta_{    \omega^\delta_{\varphi_{\delta}}} (\log {\rm tr}_{\kappa_\delta} (  \omega^\delta_{\varphi_{\delta}}) +C_0 \Phi^\beta_\delta - \Psi_{\delta} ) \notag\\
&\le C'  {\rm tr}_{   \omega^\delta_{\varphi_{\delta}}}(\kappa_\delta)+C'.\notag
\end{align}

Let
$$u=\log{\rm  tr}_{\kappa_\delta}(    \omega^\delta_{\varphi_{\delta}})+C_0 \Phi^\beta_\delta - \Psi_{\delta} -(C'+1)\bar\varphi_\delta.$$
Then
$$\Delta_{    \omega^\delta_{\varphi_{\delta}}} u\ge {\rm  tr}_{   \omega^\delta_{\varphi_{\delta}}} (\kappa_\delta)-C''.$$
By the maximum principle,  it follows
\begin{align}\label{upper-bound-phi-delta}    \omega^\delta_{\varphi_{\delta}}=\kappa_{\delta} +\sqrt{-1}\partial\bar\partial\bar\varphi_\delta\ge C^{-1}\kappa_\delta.
\end{align}
By (\ref{twisted-MA-equation-3}), we can also get
 \begin{align}\label{lower-bound-phi-delta}      \omega^\delta_{\varphi_{\delta}}\le C\kappa_\delta.
\end{align}

\end{proof}

\begin{theo}\label{phi-delta-convergence} For any  $t\in (0,\mu)$,  it holds
\begin{align}
\lim_\delta  \varphi_{t, \delta}=\varphi_t\notag
\end{align}
in sense of H\"older-norm.
\end{theo}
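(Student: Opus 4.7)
The plan is to extract a convergent subsequence from $\{\varphi_{t,\delta}\}$ using the uniform estimates already established in this section, identify the limit as a solution of the conic equation (\ref{backward_MAE}), and then invoke uniqueness to upgrade subsequential convergence to convergence of the full family. All the ingredients are essentially in place; the task is to assemble them correctly.

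First I would combine the uniform oscillation bound $\mathrm{osc}_M \hat\varphi_{t,\delta}\le C$ (derived from properness of $F_{t,\delta}$ and the Green formula earlier in the section) with the two-sided comparison $C^{-1}\kappa_\delta \le \omega^\delta_{\varphi_{t,\delta}}\le C\kappa_\delta$ of Lemma \ref{c2-estimate-twisted-metrics-lemma}. Applied to (\ref{twisted-MA-equation}), the Kolodziej H\"older estimate then yields a uniform $C^\alpha(M)$-bound on $\hat\varphi_{t,\delta}=\varphi_{t,\delta}+\Psi_\delta$, since $(\delta+|S|^2)^{-(1-\beta)}$ is uniformly bounded in $L^p$ for some $p>1$ when $1-\beta<1$. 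On any compact subset of $M\setminus D$ the equation is uniformly elliptic with smooth coefficients converging to smooth limits, so classical Evans--Krylov and Schauder theory give uniform $C^{k,\alpha'}_{\rm loc}(M\setminus D)$-bounds for every $k$.

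Second, Arzel\`a--Ascoli and a diagonal argument produce a subsequence $\delta_k\to 0$ with $\hat\varphi_{t,\delta_k}\to \hat\varphi_\infty$ in $C^{\alpha'}(M)$ for every $\alpha'<\alpha$ and in $C^\infty_{\rm loc}(M\setminus D)$. Since $\Psi_\delta\to \psi$ in $C^\alpha$ and $(\delta+|S|^2)^{-(1-\beta)}\to |S|^{-2(1-\beta)}$ pointwise on $M\setminus D$, passing to the limit in (\ref{twisted-MA-equation}) shows
\[
(\omega_0+\sqrt{-1}\partial\bar\partial\hat\varphi_\infty)^n \,=\, \frac{1}{|S|^{2(1-\beta)}}\,e^{h_0-t\hat\varphi_\infty -(\mu-t)\psi}\,\omega_0^n
\]
smoothly on $M\setminus D$, and by dominated convergence the identity holds globally in the pluripotential sense. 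Rewriting this in terms of $\hat\omega=\omega_\psi$ shows that $\varphi_\infty := \hat\varphi_\infty - \psi$ solves (\ref{backward_MAE}), and the uniform Laplacian comparison forces $\hat\omega+\sqrt{-1}\partial\bar\partial\varphi_\infty$ to be equivalent to the conic model $\kappa_0$, hence a bona fide conic K\"ahler metric with cone angle $2\pi\beta$ along $D$.

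Finally, the $C^{2,\alpha;\beta}$-regularity theorem from [JMR11] promotes $\varphi_\infty$ to an element of $\mathscr{H}^{2,\alpha';\beta}(M,\omega_0)$; in that class the uniqueness of solutions to (\ref{backward_MAE}), furnished by triviality of $\ker(\Delta_t+t)$ as exploited in the proof of Theorem \ref{Bern_thm}, forces $\varphi_\infty = \varphi_t$. Since every subsequence of $\{\varphi_{t,\delta}\}$ contains a further subsequence converging to the same limit $\varphi_t$, the whole family converges in $C^{\alpha'}$. The main obstacle I anticipate is precisely the uniqueness step: one must know that $\varphi_\infty$ lies in a function space to which the linear theory of [Do11] applies, and this is exactly where the Guenancia--Paun regularization $\kappa_\delta$ together with the two-sided $C^2$-comparison of Lemma \ref{c2-estimate-twisted-metrics-lemma} is essential---without it one would only obtain pluripotential convergence and could not identify the limit as the conic solution $\varphi_t$.
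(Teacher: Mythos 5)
Your proposal is correct and follows essentially the same route as the paper: uniform H\"older control via Kolodziej's estimate, the two-sided metric comparison of Lemma \ref{c2-estimate-twisted-metrics-lemma} passed to the limit, the $C^{2,\alpha;\beta}$-regularity theorem of [JMR11] to place the limit in $\mathscr{H}^{2,\alpha';\beta}(M,\omega_0)$, and uniqueness of the twisted conic solution (as in the proof of Theorem \ref{Bern_thm}) to identify it with $\varphi_t$. The extra detail you supply on subsequence extraction and interior Evans--Krylov/Schauder bounds is a legitimate filling-in of what the paper leaves implicit.
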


\begin{proof} First we claim that $\varphi_{t, \delta}$ converges  to a  $C^{2,\alpha;\beta}$-solution  of (\ref{backward_MAE}) as $\delta\to 0$.
In fact, by  the Kolodziej's H\"older estimate,  we see that  $\hat \varphi_{t,\delta}$ converge to a H\"older continuous  solution  $\phi'$ of following complex Monge-Amp\`ere equation in  the current sense,
\begin{equation}\label{smooth-continuity-2}
(\omega_0+\sqrt{-1}\partial\bar\partial\phi')^n= \frac{1}{|S|^{2-2\beta}}e^{h_0- t \phi' +( t-\mu)\psi}\omega_0^n.
\end{equation}
Clearly,  (\ref{smooth-continuity-2}) is nothing, just  (\ref{backward_MAE}).  Since $\omega^*=\omega_0+\sqrt{-1}\partial\bar\partial\Phi_0^\beta$ is
equivalent to $\hat\omega$,  by Lemma \ref{c2-estimate-twisted-metrics-lemma},  we get
\begin{align}\label{c2-limit-phi-delta-t}
C^{-1}  \hat\omega\le \omega_{\phi'} \le C \hat\omega, ~{\rm in}~ M\setminus D,
\end{align}
where $C$ is a uniform constant.   Note that  (\ref{smooth-continuity-2}) implies that $\phi'- \psi$  is a solution of  (\ref{backward_MAE}).  Thus by the $C^{2,\alpha;\beta}$  reguarity  theorem,
 $\phi'- \psi$ is a $C^{2,\alpha;\beta}$-solution of   (\ref{backward_MAE}).   This proves the claim.

On the other hand,  according to  the proof in Theorem \ref{Bern_thm}, it is easy to see that $C^{2,\alpha;\beta}$-solution of  (\ref{backward_MAE})
 as a  twisted K\"ahler-Einstein metric  is unique. Thus  $\phi'- \psi$ must be  $\varphi_{ t}$. The theorem is proved.

\end{proof}

\section{Smoothing of twisted Ricci potentials}

Define a Log Ricci potential $h_t$  of  $\hat\omega_{\varphi_t}$ of solution of (\ref{backward_MAE}) on  $t$ by
$$\sqrt{-1}\partial\bar\partial h_t={\rm Ric}(\hat\omega_{\varphi_{t}}) - \mu\hat\omega_{\varphi_t}- 2\pi (1-\beta)[D], ~\int_M  e^{ h_t} \hat\omega_{\varphi_{t}}^n=V, $$
and  define a twisted  Ricci potential of  $   \omega^\delta_{\varphi_{t, \delta}}= \omega_\delta+\sqrt{-1}\partial\bar\partial  \varphi_{t,\delta} $ of solution of (\ref{smooth-continuity-modified}) on  $t$ by
$$\sqrt{-1}\partial\bar\partial h_{t,\delta}=  {\rm Ric}(\omega_{\varphi_{t, \delta}}^\delta) -\mu\omega_{\varphi_{t, \delta}}^\delta-(1-\beta)\eta_\delta, ~\int_M  e^{ h_{t,\delta}}  ( \omega^\delta_{\varphi_{t,\delta}})^n=V.$$
 Then it is easy to see
$$  h_t=-(\mu-t)\varphi_t+ const, $$
and
 $$ h_{t, \delta}=-(\mu-t)\varphi_{t, \delta} + const.$$
Thus by Theorem \ref{phi-delta-convergence}, we have

\begin{lem}For any  $t\in (0,\mu)$,  it holds
\begin{align}
\lim_{\delta\to 0}  h_{t,\delta}=h_t\notag
\end{align}
in sense of H\"older-norm.
\end{lem}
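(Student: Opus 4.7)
The plan is to observe that this lemma is essentially a corollary of Theorem 4.3 once the identities $h_t = -(\mu-t)\varphi_t + \text{const}$ and $h_{t,\delta} = -(\mu-t)\varphi_{t,\delta} + \text{const}$ (stated just before the lemma) are verified. So the main job is to establish those two identities and then to control the normalization constants.

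First, I would derive the identity for $h_t$. Starting from the Monge-Ampère equation (\ref{backward_MAE})
\[
(\hat\omega + \sqrt{-1}\partial\bar\partial \varphi_t)^n \,=\, e^{h_{\hat\omega} - t\varphi_t}\,\hat\omega^n,
\]
I would take $-\sqrt{-1}\partial\bar\partial \log$ of both sides, giving
\[
{\rm Ric}(\hat\omega_{\varphi_t}) \,=\, {\rm Ric}(\hat\omega) \,-\, \sqrt{-1}\partial\bar\partial h_{\hat\omega} \,+\, t\sqrt{-1}\partial\bar\partial \varphi_t.
\]
Combining with the defining relation for $h_{\hat\omega}$ (which expresses ${\rm Ric}(\hat\omega) - \sqrt{-1}\partial\bar\partial h_{\hat\omega}$ as $\mu\hat\omega + 2\pi(1-\beta)[D]$, from (\ref{ricci-potential-omega-star})--(\ref{ricci-potential-relation}) applied to $\hat\omega$), I get
\[
{\rm Ric}(\hat\omega_{\varphi_t}) \,-\, \mu\hat\omega_{\varphi_t} \,-\, 2\pi(1-\beta)[D] \,=\, -(\mu-t)\sqrt{-1}\partial\bar\partial\varphi_t.
\]
Comparing with the definition of $h_t$ yields $h_t = -(\mu-t)\varphi_t + c_t$ for some constant $c_t$ fixed by the normalization $\int_M e^{h_t}\hat\omega_{\varphi_t}^n = V$. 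The same computation applied to (\ref{smooth-continuity-modified})/(\ref{twisted-equation-t}) gives $h_{t,\delta} = -(\mu-t)\varphi_{t,\delta} + c_{t,\delta}$ with $c_{t,\delta}$ fixed by $\int_M e^{h_{t,\delta}}(\omega^\delta_{\varphi_{t,\delta}})^n = V$.

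Next, by Theorem \ref{phi-delta-convergence}, $\varphi_{t,\delta} \to \varphi_t$ in H\"older norm as $\delta\to 0$. So it only remains to check that $c_{t,\delta} \to c_t$. For this, I would substitute the two identities into the normalization integrals and rearrange:
\[
e^{c_{t,\delta}} \,=\, \frac{V}{\int_M e^{-(\mu-t)\varphi_{t,\delta}}\,(\omega^\delta_{\varphi_{t,\delta}})^n}, \quad e^{c_t} \,=\, \frac{V}{\int_M e^{-(\mu-t)\varphi_t}\,\hat\omega_{\varphi_t}^n}.
\]
The integrand $e^{-(\mu-t)\varphi_{t,\delta}}$ converges uniformly to $e^{-(\mu-t)\varphi_t}$ by Theorem \ref{phi-delta-convergence}, and the volume forms $(\omega^\delta_{\varphi_{t,\delta}})^n$ converge weakly to $\hat\omega_{\varphi_t}^n$ (indeed, by (\ref{twisted-MA-equation}) and the approximation scheme of Section~3 together with Kolodziej's estimate, $(\omega^\delta_{\varphi_{t,\delta}})^n \to \hat\omega_{\varphi_t}^n$ as measures). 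Combining these gives $c_{t,\delta} \to c_t$.

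Putting everything together, $h_{t,\delta} - h_t = -(\mu-t)(\varphi_{t,\delta}-\varphi_t) + (c_{t,\delta}-c_t) \to 0$ in H\"older norm, which is exactly the claim. I do not anticipate a serious obstacle here; the only mild subtlety is the convergence of the normalization constants, which is handled by dominated convergence once uniform H\"older bounds on $\varphi_{t,\delta}$ and uniform $L^\infty$ bounds on the volume form ratio (coming from Lemma \ref{c2-estimate-twisted-metrics-lemma} and (\ref{twisted-MA-equation-3})) are invoked.
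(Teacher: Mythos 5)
Your proposal is correct and follows essentially the same route as the paper: the paper obtains the identities $h_t = -(\mu-t)\varphi_t + \mathrm{const}$ and $h_{t,\delta} = -(\mu-t)\varphi_{t,\delta} + \mathrm{const}$ directly from the Ricci curvature equations (\ref{log-twisted-equation-t}) and (\ref{twisted-equation-t}) and then cites Theorem \ref{phi-delta-convergence}. Your additional care in verifying that the normalization constants $c_{t,\delta}\to c_t$ (via uniform convergence of $e^{-(\mu-t)\varphi_{t,\delta}}$ and weak convergence of the volume forms, backed by the $C^2$-bounds of Lemma \ref{c2-estimate-twisted-metrics-lemma}) fills in a step the paper leaves implicit.
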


To smooth $ h_{t,\delta}$ for each  fixed $\delta\in (0,\delta_0)$,  we introduce the following  twisted K\"ahler-Ricci flow,
 \begin{align}\label{twisted-K-R-flow}
&\frac{\partial}{\partial s} \omega_{\psi}^\delta= -{\rm Ric} (\omega_{\psi}^\delta ) +\mu\omega_{\psi}^\delta+(1-\beta)\eta_\delta,\notag\\
& \omega_{\psi}^\delta(\cdot, 0)=\omega^\delta_{\psi_{0, \delta}}=\omega^\delta_{\psi_{s, \delta}}|_{s= 0}=\omega_{\varphi_{t,\delta}}^\delta.
\end{align}
Clearly,   the  twisted  Ricci potential  $h_{t,s, \delta}$ of  $\omega_{\psi_{s,\delta}}^\delta$ is given by
$$ h_{t, s, \delta}=- \frac{\partial}{\partial s} \psi_{s,\delta}+ const.$$
In particular,
$$ h_{t, \delta}=- \frac{\partial}{\partial s} \psi_{s,\delta}|_{s=0}+ const.$$

(\ref{twisted-K-R-flow}) reduces to a complex Monge-Amp\`ere flow,
\begin{align}\label{smooth-flow-MA-0}
& \frac{\partial}{\partial s} \tilde \psi =\log \frac{( \omega_{\varphi_{t, \delta}+\tilde\psi}^\delta)^n}{(\omega_{\varphi_{t, \delta}}^\delta)^n}+\mu\tilde  \psi  -  h_{t,\delta},\notag\\
& \tilde \psi_{0,\delta}=\tilde\psi_{\delta}(0,\cdot)= 0.
\end{align}
Here $h_{t,\delta}$ can be normalized so that $ h_{t, \delta}=-(\mu-t)\varphi_{t, \delta}$. Then $ h_{t, \delta}=- \frac{\partial}{\partial s} \tilde\psi_{s,\delta}|_{s=0}$
 Similarly  to  K\"ahler-Ricci flow  in [Ti97],  applying   the maximum principle to (\ref{smooth-flow-MA-0}), we have   following estimates (also see [LZ14]).
\begin{lem}\label{gradient-laplace-flow}

\begin{align} &1)~| \frac{\partial}{\partial s} \tilde\psi_{s,\delta}|^2+s|\nabla '\frac{\partial}{\partial s} \tilde\psi_{s,\delta}|^2\le e^{2\mu s} (\mu-t)^2 \|\varphi_{t,\delta}\|_{C^0(M)}^2.\notag\\
&2)  ~\Delta' (- \frac{\partial}{\partial s} \tilde\psi_{s,\delta})\ge e^{\mu s} \Delta h_{t,\delta}.\notag
\end{align}
Here $\Delta'$, $ \Delta$ are Laplace operators associated  to metrics $\omega_{\psi_{s, \delta}}^\delta, \omega_{\varphi_{t,\delta}}^\delta$, respectively.
  \end{lem}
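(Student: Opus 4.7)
The plan is to linearise the complex Monge--Amp\`ere flow \eqref{smooth-flow-MA-0} by differentiating in $s$ and then apply the parabolic maximum principle to suitably chosen auxiliary functions. Writing $u=\frac{\partial}{\partial s}\tilde\psi_{s,\delta}$ and differentiating \eqref{smooth-flow-MA-0} in $s$ gives the linear heat-type evolution
\begin{align*}
\frac{\partial}{\partial s}u=\Delta' u+\mu u,\qquad u\big|_{s=0}=-h_{t,\delta},
\end{align*}
since $\frac{\partial}{\partial s}\log\det(g')_{i\bar j}=\Delta'\frac{\partial}{\partial s}\tilde\psi_{s,\delta}$. This identity drives both estimates.

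For part (1), I would first apply the maximum principle to $e^{-2\mu s}u^2$: a direct computation gives $(\partial_s-\Delta')u^2=2\mu u^2-2|\nabla' u|^2\le 2\mu u^2$, so that $e^{-2\mu s}u^2$ is a sub-solution of the heat equation and hence $\sup_M u^2(s)\le e^{2\mu s}\sup_M h_{t,\delta}^2=e^{2\mu s}(\mu-t)^2\|\varphi_{t,\delta}\|_{C^0(M)}^2$. To recover the gradient term with its factor of $s$, I would compute $(\partial_s-\Delta')|\nabla' u|^2$ via the Bochner formula on the evolving K\"ahler metric and invoke the twisted flow identity $R'_{i\bar j}=-u_{i\bar j}+\mu g'_{i\bar j}+(1-\beta)(\eta_\delta)_{i\bar j}$; the Ricci-curvature contribution combines with the metric time-derivative term $(\partial_s g'^{i\bar j})u_i u_{\bar j}$ so that the dangerous third-order Hessian contributions cancel, and together with positivity of $\eta_\delta$ and non-positivity of the Hessian-squared terms this yields $(\partial_s-\Delta')|\nabla' u|^2\le \mu|\nabla' u|^2$. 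Setting
\begin{align*}
Q=e^{-2\mu s}\bigl(s|\nabla' u|^2+u^2\bigr),
\end{align*}
a short computation combining the two evolution inequalities gives $(\partial_s-\Delta')Q\le 0$, and the maximum principle then gives $\max_M Q(s)\le Q\big|_{s=0}=\max_M h_{t,\delta}^2\le(\mu-t)^2\|\varphi_{t,\delta}\|_{C^0(M)}^2$, which is precisely (1).

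For part (2), set $w=-u$, so that $\partial_s w=\Delta' w+\mu w$ with $w|_{s=0}=h_{t,\delta}$. Using $\partial_s g'^{i\bar j}=-g'^{i\bar p}u_{p\bar q}g'^{q\bar j}=g'^{i\bar p}w_{p\bar q}g'^{q\bar j}$ (valid because $u=-w$), I get
\begin{align*}
\frac{\partial}{\partial s}(\Delta' w)=(\partial_s g'^{i\bar j})w_{i\bar j}+\Delta'(\partial_s w)=|\partial\bar\partial w|_{g'}^2+\Delta'(\Delta' w)+\mu\Delta' w,
\end{align*}
where the first term is non-negative since $w$ is real (so that $\overline{w_{p\bar q}}=w_{q\bar p}$). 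Hence $(\partial_s-\Delta')(e^{-\mu s}\Delta' w)\ge 0$, and the parabolic minimum principle yields $\min_M e^{-\mu s}\Delta' w(s)\ge\min_M\Delta h_{t,\delta}$, which is the content of (2).

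The main obstacle I anticipate is the Bochner/Ricci cancellation step in part (1): one must carefully track how the curvature terms from the Bochner identity on the evolving metric interact with the time derivative of the inverse metric along the twisted K\"ahler--Ricci flow, so that the a priori dangerous third-order Hessian contributions cancel and leave only the clean $\mu|\nabla' u|^2$ upper bound modulo non-positive error terms and the benign contribution from $(1-\beta)(\eta_\delta)_{i\bar j}u^i u^{\bar j}$. Once this cancellation has been verified, the remainder of the argument is a routine application of the parabolic maximum/minimum principle.
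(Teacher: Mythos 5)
Your proof is correct, and it is precisely the maximum-principle argument the paper invokes (citing [Ti97] and [LZ14]) without writing out. In particular, your worry in the last paragraph is resolved exactly as you hope: the third-order terms in $(\partial_s-\Delta')|\nabla'u|^2$ cancel automatically because the $\langle\nabla'\Delta'u,\nabla'u\rangle$ contributions from $\partial_s u=\Delta'u+\mu u$ and from the Bochner formula are identical, while the two remaining second-order terms, $(\partial_s g'^{i\bar j})u_iu_{\bar j}=-g'^{i\bar p}u_{p\bar q}g'^{q\bar j}u_iu_{\bar j}$ and $-R'^{i\bar j}u_iu_{\bar j}$ via $R'_{i\bar j}=-u_{i\bar j}+\mu g'_{i\bar j}+(1-\beta)(\eta_\delta)_{i\bar j}$, cancel each other, leaving only $\mu|\nabla'u|^2$ minus the nonnegative Hessian-squared and $(\eta_\delta)^{i\bar j}u_iu_{\bar j}$ terms, so $(\partial_s-\Delta')Q\le -e^{-2\mu s}(1+\mu s)|\nabla'u|^2\le 0$ and the rest goes through as you say.
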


\begin{lem}\label{holder-estimate-perturbation-flow}Let $v=v_{t, s,\delta}$  be a normalization of $h_{t,s,\delta}$ by  adding  a suitable constant  such  that
$$\int_M v(\omega_{\psi_{s,\delta }}^\delta)^n=0.$$
Let $\gamma=\frac{1}{8+8n}$.  Then there exists a
small number $\epsilon>0$ such that for any $t$ and $\varphi_{t,\delta}$
satisfying
\begin{align}\label{t-condition-phi}(\mu-t)^{1+
\frac{\gamma}{2}}\|\varphi_{t,\delta}\|_{C^0(M)}\le\epsilon,\end{align}
we have
 \begin{align}\label{holder-estimate-h-delta}\|v\|_{C^{\frac{1}{2}}(M)} \le
C\epsilon (\mu-t)^{\frac{\gamma}{2}}
\end{align}
  and
 \begin{align}\label{c0-perturbation}{\rm osc}_M \tilde\psi_{s,\delta}\le C\epsilon(\mu -t)^{\frac{\gamma}{2}}, ~\forall~ s\in [(\mu -t)^{2\gamma}, 1],
  \end{align}
    provided that  for any $s\in [(\mu -t)^{2\gamma}, 1]$ the first non-zero eigenvalue
    \begin{align}\label{large-eigenvalue}\lambda_1\ge \lambda_0>0
\end{align}
    of Laplace operator $\Delta'$  associated to the
metric $\omega_{\psi_{s,\delta}}^\delta$  and the following condition holds: there exists a
constant  $a>0$ such that for any $x_0\in M$ and $0<r<1$,
\begin{align}\label{volume-condition}{\rm vol} (B_r(x_0)) \ge ar^{2n}
\end{align}
 with respect to  $\omega_{\psi_{s,\delta}}^\delta$. Here $C=C(
a,\lambda_0)$ denotes a uniform constant depending only on the
constants $ a$ and $\lambda_0$.

\end{lem}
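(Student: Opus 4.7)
The plan is to adapt the scheme of [Ti97, Section 3] from the smooth K\"ahler--Ricci flow to the present twisted flow (\ref{twisted-K-R-flow}). The argument will proceed in four steps: (i) derive a parabolic equation for the mean-zero twisted Ricci potential $v=v_{t,s,\delta}$ along the flow and use it, together with the eigenvalue bound (\ref{large-eigenvalue}), to obtain an $L^2$-estimate $\|v(s)\|_{L^2}\le C\epsilon(\mu-t)^{\gamma/2}$; (ii) upgrade this to an $L^\infty$-estimate by Moser iteration, which requires the uniform Sobolev inequality provided by the volume non-collapse (\ref{volume-condition}); (iii) promote the $L^\infty$-bound to the H\"older estimate (\ref{holder-estimate-h-delta}) by the De Giorgi--Nash--Moser parabolic regularity; and (iv) derive (\ref{c0-perturbation}) by integrating the flow equation $\partial_s\tilde\psi_{s,\delta}=-v+c(s)$ in time, splitting the time interval at $\sigma=(\mu-t)^{2\gamma}$ (using the pointwise bound of Lemma \ref{gradient-laplace-flow}(1) on the short initial piece and Step (iii) on the rest).

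Concretely, for Step (i) I would differentiate the Monge--Amp\`ere flow (\ref{smooth-flow-MA-0}) once more in $s$ to see that $u:=\partial_s\tilde\psi_{s,\delta}$ satisfies $\partial_s u=\Delta' u+\mu u$; hence $v=-u+c(s)$, with $c(s)$ chosen to normalize the spatial mean against $(\omega_{\psi_{s,\delta}}^\delta)^n$, satisfies a heat-type equation $\partial_s v=\Delta' v+\mu v$ modulo a spatial constant. Testing by $v$, integrating by parts, and invoking $\|\nabla v\|_{L^2}^2\ge\lambda_0\|v\|_{L^2}^2$ gives a Gronwall-type inequality for $\|v(s)\|_{L^2}^2$ whose forcing is controlled by Lemma \ref{gradient-laplace-flow}(1); at $s=(\mu-t)^{2\gamma}$ the exponential factors $e^{\mu s}$ are harmless and the initial $\|v(0)\|_{C^0}\le 2(\mu-t)\|\varphi_{t,\delta}\|_{C^0}$ combined with the hypothesis (\ref{t-condition-phi}) yields the $L^2$-bound of order $\epsilon(\mu-t)^{\gamma/2}$. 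Step (ii) is then Moser iteration on the parabolic equation for $|v|^p$: the non-collapse condition (\ref{volume-condition}) gives a Sobolev constant depending only on $a$, so iteration produces $\|v\|_{L^\infty}\le C(a,\lambda_0)\|v\|_{L^2}$. Step (iii) is the classical parabolic H\"older estimate applied to the bounded right-hand side of the equation for $v$, while Step (iv) is the direct time integration described above.

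The main obstacle is Step (i). Under (\ref{t-condition-phi}), the natural $C^0$-norm of the initial datum $v(0)$ is only $O(\epsilon(\mu-t)^{-\gamma/2})$, which \emph{grows} as $t\uparrow\mu$; one must show that after time $s\ge(\mu-t)^{2\gamma}$ the combined effect of parabolic smoothing, the mean-zero normalization of $v$, and the spectral gap $\lambda_0$ compensates this growth by an extra factor of $(\mu-t)^\gamma$. Careful bookkeeping of how many Moser iteration steps are needed---a number depending on the Sobolev dimension and hence on $n$---and of the power of $(\mu-t)$ absorbed at each step is what fixes the precise value $\gamma=\tfrac{1}{8+8n}$. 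The remaining three steps are essentially standard once this calibration in Step (i) is in hand, and Lemma \ref{functional-smoothing} together with the regularity established at fixed $\delta>0$ ensures that all estimates are genuinely uniform in $\delta$.
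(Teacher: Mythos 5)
Your overall skeleton (the flow estimates of Lemma \ref{gradient-laplace-flow}, the spectral gap, the volume non-collapse, and the time-splitting at $s=(\mu-t)^{2\gamma}$ for the oscillation bound) is in the spirit of the paper's proof, which simply runs the smoothing lemma of [Ti97] (Proposition 4.1 of [CTZ05]). But your Step (i) has a genuine gap, and you have in fact put your finger on it yourself without resolving it. A Gronwall/energy estimate for the heat-type equation satisfied by $v$ can at best propagate the size of the initial datum: since $\|v(0)\|_{C^0}\le 2(\mu-t)\|\varphi_{t,\delta}\|_{C^0}\le 2\epsilon(\mu-t)^{-\gamma/2}$ under (\ref{t-condition-phi}), and the spectral gap $\lambda_1\ge\lambda_0$ is a \emph{fixed} constant, the decay factor accumulated by time $s\le 1$ is bounded and gives no extra power of $(\mu-t)$; ``parabolic smoothing plus mean-zero normalization'' cannot convert $\epsilon(\mu-t)^{-\gamma/2}$ into $\epsilon(\mu-t)^{+\gamma/2}$. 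The missing ingredient is estimate 2) of Lemma \ref{gradient-laplace-flow}, which you never invoke: it gives the one-sided bound $\Delta'v\ge -e^{\mu s}n(\mu-t)$ (since $h_{t,\delta}=-(\mu-t)\varphi_{t,\delta}$ and $\Delta\varphi_{t,\delta}\le n$), whence, using $\int_M\Delta'v\,(\omega^\delta_{\psi_{s,\delta}})^n=0$, one gets $\|\Delta'v\|_{L^1}\le C(\mu-t)$. This is the sole source of the gain: it yields $\|\nabla'v\|_{L^2}^2=-\int v\Delta'v\le\|v\|_{C^0}\|\Delta'v\|_{L^1}\le C(\mu-t)\|v\|_{C^0}$, then the Poincar\'e inequality from (\ref{large-eigenvalue}) gives $\|v\|_{L^2}^2\le C\lambda_0^{-1}(\mu-t)\|v\|_{C^0}$, and the elementary interpolation furnished by (\ref{volume-condition}), namely $\|v\|_{C^0}^{2n+2}\le Ca^{-1}\|v\|_{L^2}^2\|\nabla'v\|_{C^0}^{2n}$, closes the loop once the pointwise gradient bound $\|\nabla'v\|_{C^0}\le s^{-1/2}e^{\mu}\|h_{t,\delta}\|_{C^0}$ of Lemma \ref{gradient-laplace-flow}(1) is inserted for $s\ge(\mu-t)^{2\gamma}$. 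Solving the resulting algebraic inequality for $\|v\|_{C^0}$ is what fixes $\gamma=\tfrac1{8(n+1)}$ --- not a count of Moser iteration steps.

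A secondary point: the paper never uses Moser iteration or parabolic De Giorgi--Nash--Moser. The H\"older bound (\ref{holder-estimate-h-delta}) is obtained by interpolating the $C^0$ bound above against the explicit $C^1$ bound $\|\nabla'v\|_{C^0}\le s^{-1/2}e^{\mu}\|h_{t,\delta}\|_{C^0}$; this is exactly why the exponent is $\tfrac12$ and why the constant depends only on $a$ and $\lambda_0$ as the statement requires. Your Steps (ii)--(iii) would need a uniform Sobolev inequality, which does not follow from the volume lower bound (\ref{volume-condition}) alone, so as written they would smuggle an extra dependence into $C$. Your Step (iv) is correct and coincides with the paper's argument.
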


\begin{proof} Lemma \ref{holder-estimate-perturbation-flow} can be proved following the argument  of  smoothing lemma  in [Ti97] (also see Proposition 4.1 in [CTZ05]). In fact, under the  conditions (\ref{c0-perturbation})
and (\ref{large-eigenvalue}),  using the estimates  1) and 2)  in  Lemma \ref{twisted-K-R-flow},  we get
$$|v|
\le C( a,\lambda_0)
(1+\|h_{t,\delta}\|_{C^0(M)})(\mu -t)^{\frac
{3}{8(n+1)}},~\forall~
s\in[(\mu -t)^{2\gamma}, 1]. $$
 On the other hand,  again by the  estimate 1)  in  Lemma \ref{twisted-K-R-flow},  we have
 \begin{align}  \|\nabla' v\|^2_{C^0(M)}  \leq \frac{1}{s}
 e^2\|h_{t,\delta}\|^2_{C^0(M)},\notag
~\forall ~s>0.
\end{align}
 Combining  these two relations,   we derive
\begin{align}
\|v_s\|_{C^{\frac{1}{2}}(M)}
&\le C( a,
\lambda_0)(1+\|h_{t,\delta}\|_{C^0(M)})(\mu -t)^{\frac
{1}{8(n+1)}}\notag\\
&\le C [\epsilon+\|h_{t,\delta}\|_{C^0(M)} (\mu -t)^{\frac
{\gamma}{2}}] (\mu -t)^{\frac
{\gamma}{2}} ,~\forall~
s\in[(\mu -t)^{2\gamma}, 1]\notag.
\end{align}
 Note
  $$-(\mu-t)\varphi_{t,\delta}=h_{t,\delta}.$$
Thus under the assumption (\ref{t-condition-phi}),  we get (\ref{holder-estimate-h-delta}) immediately.

By the estimate 1) in Lemma \ref{gradient-laplace-flow},   we have
 \begin{align}| \frac{\partial
}{\partial s}
\tilde\psi|\le
e\|h_{t,\delta}\|_{C^0(M)},~\forall~s\le
1.\notag
\end{align}
 Note
 $$\tilde\psi=(\int_0^{(\mu-t)^{2\gamma}}+\int_{(\mu-t)^{2\gamma}}^s)(\frac{\partial }{\partial s}
 \tilde\psi).$$
Then by the assumption  (\ref{t-condition-phi}),  we obtain
\begin{align}\label{c0-perturbation-2} {\rm osc}_M \tilde\psi &\le (\mu-t)^{2\gamma} \sup_{s\in [0,1]} \|\frac{\partial }{\partial s}
 \tilde\psi \|_{C^0(M)}\notag\\
 & + \sup_{s\in [(\mu -t)^{2\gamma} ,1]} \|  \frac{\partial }{\partial s}
 \tilde\psi \|_{C^0(M)}\notag\\
 &\le C\epsilon (\mu -t)^{\frac{\gamma}{2}}.
\end{align}

\end{proof}

\section{Convergence of twisted K\"ahler-Ricci flows}

In this section, we  deal with  the local convergence of flows (\ref{twisted-K-R-flow}). First, similarly  to Lemma \ref{c2-estimate-twisted-metrics-lemma}, we have

\begin{lem}\label{c2-parabolic-twisted-KR} For any  $t\in (0,\mu),\delta\in(0,\delta_0]$,  it holds
\begin{align}\label{c2-parabolic-estimate-twisted-metrics}
C^{-1}  \kappa_\delta\le \omega^\delta_{\psi_{s,\delta}} \le C \kappa_\delta.
\end{align}
Here $C$ is a uniform  constant depends only on  metrics  $\hat\omega, \omega^*$ and norms of  $ \|\psi_{s, \delta}\|_{C^0(M)}$,  $\|\frac{\partial}{\partial s}\psi_{s,\delta}\|_{C^0(M)}.$
\end{lem}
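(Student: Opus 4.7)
The plan is to transport the elliptic $C^{2}$-argument of Lemma \ref{c2-estimate-twisted-metrics-lemma} to the parabolic setting of the flow (\ref{twisted-K-R-flow}). As before, the natural reference is the Guenancia--Paun metric $\kappa_{\delta}=\omega_{0}+\sqrt{-1}\partial\bar\partial\Phi^{\beta}_{\delta}$, whose bisectional curvature satisfies the key inequality (\ref{gp-inequality}) uniformly in $\delta$. Introducing $\bar\psi_{s,\delta}=\psi_{s,\delta}-\Phi^{\beta}_{\delta}+\varphi_{t,\delta}$ so that $\omega^{\delta}_{\psi_{s,\delta}}=\kappa_{\delta}+\sqrt{-1}\partial\bar\partial\bar\psi_{s,\delta}$, the twisted K\"ahler--Ricci flow (\ref{twisted-K-R-flow}) becomes a parabolic complex Monge--Amp\`ere equation
\[
\frac{\partial\bar\psi_{s,\delta}}{\partial s}=\log\frac{(\kappa_{\delta}+\sqrt{-1}\partial\bar\partial\bar\psi_{s,\delta})^{n}}{\kappa_{\delta}^{n}}+\mu\bar\psi_{s,\delta}-h_{\kappa_{\delta}}+(1-\beta)\log(\delta+|S|^{2})+(\text{terms in }\Phi^{\beta}_{\delta},\Psi_{\delta}).
\]
By hypothesis $\|\bar\psi_{s,\delta}\|_{C^{0}(M)}$ and $\|\partial_{s}\bar\psi_{s,\delta}\|_{C^{0}(M)}$ are bounded uniformly in $\delta$.

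Next, I apply the parabolic Aubin--Yau maximum principle to the trace $\log\mathrm{tr}_{\kappa_{\delta}}(\omega^{\delta}_{\psi_{s,\delta}})$. The parabolic analogue of the computation leading to (\ref{c2-yau}) is
\[
\Bigl(\frac{\partial}{\partial s}-\Delta'\Bigr)\log\mathrm{tr}_{\kappa_{\delta}}(\omega^{\delta}_{\psi_{s,\delta}})\le\frac{1}{\mathrm{tr}_{\kappa_{\delta}}(\omega^{\delta}_{\psi_{s,\delta}})}\Bigl[\sum_{i<j}\bigl(\tfrac{1+\bar\psi_{i\bar i}}{1+\bar\psi_{j\bar j}}+\tfrac{1+\bar\psi_{j\bar j}}{1+\bar\psi_{i\bar i}}-2\bigr)R_{\delta i\bar ij\bar j}+\Delta_{\kappa_{\delta}}F_{s,\delta}\Bigr],
\]
where $F_{s,\delta}$ collects the right-hand side of the Monge--Amp\`ere flow (time derivative of $\bar\psi_{s,\delta}$ and auxiliary terms), all with $\Delta_{\kappa_{\delta}}F_{s,\delta}$ controlled by $\mathrm{tr}_{\kappa_{\delta}}(\omega^{\delta}_{\psi_{s,\delta}})$, $\mathrm{tr}_{\omega^{\delta}_{\psi_{s,\delta}}}(\kappa_{\delta})$, the uniform bound $\Delta_{\kappa_{\delta}}h_{0}\ge-A$ from (\ref{laplace-h}), and the $C^{0}$-norms of $\psi_{s,\delta}$ and $\partial_{s}\psi_{s,\delta}$. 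Invoking the Guenancia--Paun inequality (\ref{gp-inequality}) with $\phi=\bar\psi_{s,\delta}$ absorbs the negative curvature contribution and the troublesome logarithmic term $\Delta_{\kappa_{\delta}}\log(\kappa_{\delta}^{n}(\delta+|S|^{2})^{1-\beta}/\omega_{0}^{n})$, exactly as in the proof of Lemma \ref{c2-estimate-twisted-metrics-lemma}.

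Forming the auxiliary function
\[
u=\log\mathrm{tr}_{\kappa_{\delta}}(\omega^{\delta}_{\psi_{s,\delta}})+C_{0}\Phi^{\beta}_{\delta}-\Psi_{\delta}-(C'+1)\bar\psi_{s,\delta},
\]
I obtain
\[
\Bigl(\frac{\partial}{\partial s}-\Delta'\Bigr)u\le-\mathrm{tr}_{\omega^{\delta}_{\psi_{s,\delta}}}(\kappa_{\delta})+C''.
\]
The parabolic maximum principle (with the initial data at $s=0$ controlled by Lemma \ref{c2-estimate-twisted-metrics-lemma}) then yields $\mathrm{tr}_{\kappa_{\delta}}(\omega^{\delta}_{\psi_{s,\delta}})\le C$ uniformly, i.e.\ the upper bound $\omega^{\delta}_{\psi_{s,\delta}}\le C\kappa_{\delta}$. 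The matching lower bound follows from the Monge--Amp\`ere flow equation itself: the determinant ratio $(\omega^{\delta}_{\psi_{s,\delta}})^{n}/\kappa_{\delta}^{n}=\exp(\partial_{s}\bar\psi_{s,\delta}-\mu\bar\psi_{s,\delta}+h_{\kappa_{\delta}}-\dots)$ is bounded above and below by the hypothesized $C^{0}$ bounds, so combining it with the upper trace bound forces $\omega^{\delta}_{\psi_{s,\delta}}\ge C^{-1}\kappa_{\delta}$.

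The principal obstacle is controlling the extra parabolic term $\partial_{s}\bar\psi_{s,\delta}$ that appears in $F_{s,\delta}$: unlike the elliptic situation of Lemma \ref{c2-estimate-twisted-metrics-lemma}, this contribution must be absorbed uniformly in $\delta$, and it is precisely here that the hypothesis on $\|\partial_{s}\psi_{s,\delta}\|_{C^{0}(M)}$ enters. A secondary subtlety is that the Guenancia--Paun estimate (\ref{gp-inequality}) holds with constants independent of $\delta$ only because the auxiliary potential $\Phi^{\beta}_{\delta}$ satisfies properties (1)--(3) stated above; that uniformity is essential to keep the final constant $C$ in (\ref{c2-parabolic-estimate-twisted-metrics}) independent of $\delta$.
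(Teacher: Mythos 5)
Your proposal follows the same route the paper takes: rewrite the flow as a parabolic complex Monge--Amp\`ere equation with respect to the Guenancia--Paun reference $\kappa_{\delta}$, run the parabolic Aubin--Yau trace computation, absorb the curvature and singular logarithmic terms via (\ref{gp-inequality}) together with (\ref{laplace-h}), apply the maximum principle to an auxiliary function of the form $\log\mathrm{tr}_{\kappa_{\delta}}(\omega^{\delta}_{\psi_{s,\delta}})+C_{0}\Phi^{\beta}_{\delta}-B\bar\psi_{s,\delta}$, and read off the lower bound from the equation. One small bookkeeping slip: for $\omega^{\delta}_{\psi_{s,\delta}}=\kappa_{\delta}+\sqrt{-1}\partial\bar\partial\bar\psi_{s,\delta}$ to hold, you need $\bar\psi_{s,\delta}=\psi_{s,\delta}+\Psi_{\delta}-\Phi^{\beta}_{\delta}$ (not $\psi_{s,\delta}-\Phi^{\beta}_{\delta}+\varphi_{t,\delta}$); once $\Psi_{\delta}$ is absorbed into $\bar\psi_{s,\delta}$, the extra $-\Psi_{\delta}$ you carried over from the elliptic Lemma \ref{c2-estimate-twisted-metrics-lemma} into the auxiliary function is no longer needed (there is no separate $-(\mu-t)\Psi_{\delta}$ source term on the right-hand side of the parabolic equation), though including it does no harm.
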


\begin{proof}
Let $\bar\psi_\delta=\bar\psi_{s, \delta}=\psi_{s, \delta}+ \Psi_{\delta}-\Phi_{\delta}^\beta$. Then  by (\ref{smooth-flow-MA-0}), $\bar\psi=\bar\psi_\delta$ satisfies the following complex Monge-Amp\`ere flow,
\begin{align}\label{smooth-flow-MA}
& \frac{\partial}{\partial s} \bar \psi =\log \frac{(\kappa_\delta+\sqrt{-1}\partial\bar\partial\bar\psi)^n}{\kappa_\delta^n}+\mu\bar  \psi  - \bar h_{\kappa_\delta},\notag\\
& \bar \psi_{0,\delta}=\bar\psi_{\delta}(0,\cdot)= \varphi_{t,\delta}+\Psi_{\delta} -\Phi_{\delta}^\beta.
\end{align}

Following the estimate of (\ref{c2-yau}),  for the parabolic  equation  (\ref{smooth-flow-MA}),  we  get  Yau's $C^2$-estimate,
\begin{align} &(\frac{\partial}{\partial s}  -\Delta_{ \omega_{\psi_\delta}}^\delta) \log {\rm tr}_{\kappa_\delta} (\omega_{\psi_\delta}^\delta)\notag\\
&\le\frac{1}{{\rm tr}_{\kappa_\delta} (\omega_{\psi_\delta}^\delta)}\sum_{i<j} (\frac{1+\bar\psi_{ i\bar i}} {1+\bar\psi_{ j\bar j}} +\frac{1+\bar\psi_{ j\bar j}}{1+\bar\psi_{ i\bar i}} -2)R_{\delta i\bar ij\bar j} +\frac{1}{{\rm tr}_{\kappa_\delta} (\omega_{\psi_\delta}^\delta)} \Delta_{ \kappa_{\delta}}(\mu\bar  \psi_\delta - \bar h_{\kappa_\delta} )\notag
\end{align}
On the other hand, by  (\ref{laplace-h}), we have
\begin{align} &\frac{1}{{\rm tr}_{\kappa_\delta} (\omega_{\psi_\delta}^\delta)} \Delta_{ \kappa_{\delta}}(\mu\bar  \psi_\delta - \bar h_{\kappa_\delta} )\notag\\
&\le \frac{1}{{\rm tr}_{\kappa_\delta} (\omega_{\psi_\delta}^\delta)} \Delta_{ \kappa_{\delta}}  \log(\frac{\kappa_\delta^n}{\omega_0^n}\times (\delta+|S|^2_0)^{1-\beta}) +\frac{A}{{\rm tr}_{\kappa_\delta} (\omega_{\psi_\delta}^\delta)}
+\mu.
\notag
\end{align}
By  the Guenancia-Paun  inequality (\ref{gp-inequality}), it follows
\begin{align}&(\frac{\partial}{\partial s}  - \Delta_{ \omega_{\psi_\delta}^\delta}) (\log {\rm tr}_{\kappa_\delta} (\omega_{\psi_\delta}^\delta) +C_0 \Phi^\beta_\delta) \notag\\
&\le C_0' {\rm tr}_{\omega_{\psi_\delta}^\delta}(\kappa_\delta)+C_0', \notag
\end{align}
where $C_0$ and $C_0'$ are two uniform constants depending only on  metrics $\hat\omega$ and $\omega^*$. Hence by choosing a large number $B$, we deduce
 \begin{align}&(\frac{\partial}{\partial s}  - \Delta_{ \omega_{\psi_\delta}^\delta}) (\log {\rm tr}_{\kappa_\delta} (\omega_{\psi_\delta}^\delta) +C_0 \Phi^\beta_\delta -B\bar \psi_\delta ) \notag\\
&\le - {\rm tr}_{\omega_{\psi_\delta}^\delta}(\kappa_\delta)+C_0''. \notag
\end{align}
Now  we can apply the maximum principle to see that there exists a  uniform constant $C$, which depends only on $\hat\omega, \omega^*,   \|\psi_{s,\delta}\|_{C^0(M)}$ and  $\|\frac{\partial}{\partial s}\psi_{s,\delta}\|_{C^0(M)}$, such that
\begin{align}\omega_{\psi_{s,\delta}}^\delta=\kappa_\delta+ \sqrt{-1}\partial\bar\partial \bar\psi_{s,\delta} \ge C^{-1}\kappa_\delta.\notag
\end{align}
By (\ref{smooth-flow-MA}), we  also obtain
 \begin{align}\kappa_\delta  \ge  C'^{-1} \omega_{\psi_{s, \delta}}^\delta,\notag
\end{align}
where $C'=C'( \hat\omega, \omega^*, \|\psi_{s,\delta}\|_{C^0(M)}, \|\frac{\partial}{\partial s}\psi_{s,\delta}\|_{C^0(M)}).$
\end{proof}

\begin{theo}\label{perturbation-limit-from-flow}  For any  $s\in (0,1]$,  $\omega_{\psi_{s,\delta}}^\delta$ converge to a conic K\"ahler metric $\hat\omega_{\tilde\phi_{t,s}}=\hat\omega+\sqrt{-1}\partial\bar\partial \tilde \phi_{t,s}$
in sense of $C^{2,\alpha;\beta}$ K\"ahler potentials.

\end{theo}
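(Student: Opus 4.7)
The plan is to pass to the limit $\delta\to 0$ in the parabolic complex Monge--Amp\`ere flow \eqref{smooth-flow-MA-0} while keeping uniform control in the $\kappa_\delta$-norm, and then invoke the $C^{2,\alpha;\beta}$ conic regularity theory to identify the limit as a conic K\"ahler metric.

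\emph{Step 1: Uniform $C^{0}$ bounds for $\psi_{s,\delta}$ and $\partial_{s}\psi_{s,\delta}$.} By Theorem \ref{phi-delta-convergence}, the potentials $\varphi_{t,\delta}$ converge to $\varphi_{t}$ in H\"older norm, so $\|\varphi_{t,\delta}\|_{C^{0}(M)}$ is uniformly bounded in $\delta$. Recall $h_{t,\delta}=-(\mu-t)\varphi_{t,\delta}+$ const. Estimate 1) of Lemma \ref{gradient-laplace-flow} then gives $\|\partial_{s}\tilde\psi_{s,\delta}\|_{C^{0}(M)}\le e^{\mu}(\mu-t)\|\varphi_{t,\delta}\|_{C^{0}(M)}$ uniformly in $\delta$, and integrating from $s=0$ (where $\tilde\psi=0$) yields uniform $C^{0}$ bounds on $\tilde\psi_{s,\delta}$, hence on $\psi_{s,\delta}=\varphi_{t,\delta}+\tilde\psi_{s,\delta}$, for all $s\in[0,1]$.

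\emph{Step 2: Uniform $C^{2}$ bound in the $\kappa_\delta$-norm.} With the $C^{0}$ estimates of Step 1 in hand, Lemma \ref{c2-parabolic-twisted-KR} yields
\begin{equation*}
C^{-1}\kappa_{\delta}\le \omega_{\psi_{s,\delta}}^{\delta}\le C\kappa_{\delta},\qquad s\in(0,1],
\end{equation*}
for a constant $C$ independent of $\delta$. Since $\Phi^{\beta}_{\delta}\to k|S|^{2\beta}$ in H\"older norm, $\kappa_{\delta}$ converges to a conic K\"ahler metric equivalent to $\omega^{*}$, so the bound is genuinely a two-sided comparison with a fixed conic background in the limit.

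\emph{Step 3: Interior higher regularity and subsequential limit.} Away from $D$, the flow \eqref{smooth-flow-MA-0} is a uniformly parabolic complex Monge--Amp\`ere equation in view of Step 2, so the parabolic Evans--Krylov theorem followed by Schauder bootstrap produces, on any compact $K\Subset M\setminus D$ and any time interval $[s_{0},1]\subset(0,1]$, uniform $C^{k,\alpha}$-estimates for $\tilde\psi_{s,\delta}$ for every $k$. By Arzel\`a--Ascoli and a diagonal argument we extract a subsequence $\psi_{s,\delta_j}\to \tilde\phi_{t,s}$ smoothly on compact subsets of $(M\setminus D)\times(0,1]$. The limit satisfies the formal $\delta=0$ limit of \eqref{smooth-flow-MA-0}, which is the conic twisted K\"ahler--Ricci flow starting from $\omega_{\varphi_{t}}$; together with Step 2 and Lemma \ref{c2-estimate-twisted-metrics-lemma} it follows that $C^{-1}\hat\omega\le \hat\omega+\sqrt{-1}\partial\bar\partial\tilde\phi_{t,s}\le C\hat\omega$ on $M\setminus D$.

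\emph{Step 4: Conic $C^{2,\alpha;\beta}$-regularity of the limit and full convergence.} The limit $\tilde\phi_{t,s}$ satisfies an elliptic complex Monge--Amp\`ere equation of exactly the same structural form as \eqref{complex-MA-equation-conic} (with inhomogeneous term bounded appropriately in a conic H\"older class), so the Jeffres--Mazzeo--Rubinstein (and Guenancia--Paun) $C^{2,\alpha;\beta}$ regularity theorem applies and gives $\tilde\phi_{t,s}\in C^{2,\alpha;\beta}(M)$ with $\hat\omega_{\tilde\phi_{t,s}}$ a conic K\"ahler metric of cone angle $2\pi\beta$ along $D$. Uniqueness of the limit along any convergent subsequence (it is characterized as the unique solution of the parabolic conic Monge--Amp\`ere equation with the prescribed initial value in the $C^{2,\alpha;\beta}$ class) upgrades the subsequential convergence to full convergence as $\delta\to 0$ in the $C^{2,\alpha;\beta}$-topology of K\"ahler potentials. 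The main obstacle is precisely Step 4: transferring the uniform smooth interior estimates on $M\setminus D$ to genuine $C^{2,\alpha;\beta}$ control up to $D$, which requires invoking the conic regularity theory rather than any direct estimate on the approximate sequence, since the smooth estimates for $\omega_{\psi_{s,\delta}}^{\delta}$ inevitably degenerate as $\delta\to 0$ near the divisor.
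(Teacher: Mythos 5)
Your Steps 1 and 2 match the paper exactly (Lemma \ref{gradient-laplace-flow} for the $C^{0}$ bounds, Lemma \ref{c2-parabolic-twisted-KR} for the two-sided $\kappa_\delta$-comparison). The divergence is at Step 3, and it introduces a genuine gap.

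Your Step 3 applies interior parabolic Evans--Krylov and Schauder on compacts $K\Subset M\setminus D$, which only gives you smooth control of $\dot\psi_{s,\delta}$ \emph{away} from $D$, with constants degenerating as $K$ approaches $D$. When you then pass to the limit and write down the elliptic equation for $\tilde\phi_{t,s}$, its right-hand side contains the factor $e^{f}$ where $f=\lim_{\delta\to 0}\dot\psi_{s,\delta}$, and you have not shown that $f$ lies in a conic H\"older class up to $D$. That is exactly the hypothesis the Jeffres--Mazzeo--Rubinstein $C^{2,\alpha;\beta}$-regularity theorem requires on the inhomogeneous term, so Step 4 as written is not justified. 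Saying "with inhomogeneous term bounded appropriately in a conic H\"older class" assumes the missing fact rather than proving it.

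The paper closes this gap differently: it differentiates the flow in $s$ to get the linear parabolic equation
$\bigl(\partial_s-\Delta_{\omega_{\psi_\delta}^\delta}\bigr)\dot\psi_\delta=\mu\,\dot\psi_\delta$,
then uses the uniform two-sided comparison $C^{-1}\kappa_\delta\le\omega_{\psi_{s,\delta}}^\delta\le C\kappa_\delta$ from Lemma \ref{c2-parabolic-twisted-KR} to get a uniform (in $\delta$) Sobolev constant, and runs a \emph{global} parabolic Moser iteration. This yields a uniform H\"older estimate for $\dot\psi_\delta$ with respect to $\kappa_\delta$, valid all the way up to $D$, so that the limit $f$ is H\"older with respect to the conic distance $|\cdot|_\omega$. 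Combined with the Kolodziej H\"older estimate for the potentials $\psi_\delta$ themselves, this puts the limiting Monge--Amp\`ere equation \eqref{limit-phi-delta-t-flow} in exactly the form to which the JMR regularity theorem applies. If you replace your Step 3 by this global Moser iteration you recover the paper's argument; without some substitute for it, the conclusion of Step 4 does not follow.

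Two smaller remarks. First, the uniqueness mechanism you invoke to upgrade subsequential to full convergence (uniqueness for the conic parabolic Monge--Amp\`ere flow in $C^{2,\alpha;\beta}$) is not part of the paper's argument and is itself a nontrivial claim that you would need to justify. Second, note that in the paper the $C^0$ bound on $\|\varphi_{t,\delta}\|_{C^{0}(M)}$ uniform in $\delta$ comes from Theorem \ref{phi-delta-convergence}, which you do use correctly.
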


\begin{proof}  By  the estimate 1) in  Lemma \ref{gradient-laplace-flow},  we have
\begin{align}\label{c0-psi-s} \|\psi_{s, \delta}\|_{C^0(M)}, \|\frac{\partial}{\partial s}\psi_{s,\delta}\|_{C^0(M)} \le e(\mu-t) \|\varphi_{t,\delta}\|_{C^0(M)}.
\end{align}
Then by   Lemma \ref {c2-parabolic-twisted-KR},  we see that there exists a  uniform constant  $C$, which depends only on $\hat\omega,\varphi_t$,  such that
$$C^{-1}  \kappa_\delta\le \omega_{\psi_{s, \delta}}^\delta \le C \kappa_\delta,~\forall ~\delta\in (0,\delta_0].$$
Thus the Sobolev constant  associated to  $\omega_{\psi_{s, \delta}^\delta}$  is uniformly bounded above  as same as  the metric $\kappa_\delta$ (cf. [LZ14]).
 Derivativing (\ref{smooth-flow-MA}) on $s$, we have
$$(\frac{\partial}{\partial s}  -\Delta_{ \omega_{\psi_{\delta}}^\delta }) \dot\psi_\delta=\mu\dot\psi_\delta,$$
where $\dot\psi_\delta=\dot\psi_{s,\delta}=\frac{\partial }{\partial s} \psi_{s,\delta}$.   Hence   the standard Moser iteration  method for the parabolic equation   implies
that there exist  a  positive number $\alpha$ and  a uniform constant $C$ such that
$$\sup_{x,y\in M} \frac{ |\dot \psi_{\delta}(x)-\dot\psi_{\delta}(y)|} {|x-y|_{\kappa_{\delta}}^\alpha}\le C.$$
As a consequece, $\dot \psi_{\delta}$ converges  to a  H\"older continueous function $f$ associated to the metric $\omega$ as $\delta\to 0$.  Namely,
$$\sup_{x,y\in M} \frac{ |f(x)-f(y)|} {|x-y|_{\omega}^\alpha}\le C.$$
  On the other hand, by the Kolodziej's H\"older estimate,  $ \psi_{\delta}$ are  uniformly  H\"older continuous functions, so they converge to
 a H\"older continuous function $\tilde\phi_t=\tilde\phi_{t,s}$ as $\delta\to 0$.  Moreover, $\tilde\phi_t$ is a current solution of following complex Monge-Amp\`ere equation,
\begin{equation}\label{limit-phi-delta-t-flow}
(\hat\omega+\sqrt{-1}\partial\bar\partial\phi)^n=e^{f+h_{\hat\omega}-\mu\phi}(\hat\omega)^n.
\end{equation}
By the reguarity theorem  in [JMR11],  it follows  that  $\tilde\phi_t$ is a $C^{2,\alpha';\beta}$-solution.  Hence $\hat\omega_{\tilde\phi_t}$ is a  conic K\"ahler metric.

\end{proof}

\begin{lem}\label{small-c0-puterbation-limit}For any $t $ and $\varphi_t$
satisfying
\begin{align}\label{t-condition}(\mu-t)^{1+
\frac{\gamma}{2}}\|\varphi_{t}\|_{C^0(M)}\le\epsilon,
\end{align}
 it holds
\begin{align}\label{c0-perturbation-2-limit} {\rm osc }_M (\tilde\phi_{t,s}-\varphi_t)\le C\epsilon(\mu -t)^{\frac{\gamma}{2}}, ~\forall ~s\in (0,1].
\end{align}
\end{lem}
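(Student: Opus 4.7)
The plan is to deduce the estimate by taking $\delta\to 0$ in the oscillation bound (\ref{c0-perturbation}) from Lemma \ref{holder-estimate-perturbation-flow} applied to each approximating twisted K\"ahler-Ricci flow (\ref{twisted-K-R-flow}), supplemented by a short-time bound covering $s\in(0,(\mu-t)^{2\gamma})$. Setting $\tilde\psi_{s,\delta}=\psi_{s,\delta}-\varphi_{t,\delta}$, the smallness hypothesis $(\mu-t)^{1+\gamma/2}\|\varphi_{t,\delta}\|_{C^0(M)}\le 2\epsilon$ of Lemma \ref{holder-estimate-perturbation-flow} follows from the standing assumption (\ref{t-condition}) together with the H\"older convergence $\varphi_{t,\delta}\to\varphi_t$ furnished by Theorem \ref{phi-delta-convergence}.

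Before invoking Lemma \ref{holder-estimate-perturbation-flow}, its geometric hypotheses (\ref{large-eigenvalue}) and (\ref{volume-condition}) must be checked uniformly in $\delta$. Lemma \ref{c2-parabolic-twisted-KR} shows that $\omega^\delta_{\psi_{s,\delta}}$ is uniformly equivalent to $\kappa_\delta$, which in turn is uniformly equivalent to $\omega^*$; this yields a uniform Sobolev constant and hence the volume non-collapsing (\ref{volume-condition}). For the first-eigenvalue bound (\ref{large-eigenvalue}), I would first note that at $s=0$ the twisted Ricci of $\omega^\delta_{\varphi_{t,\delta}}$ satisfies ${\rm Ric}(\omega^\delta_{\varphi_{t,\delta}})-(1-\beta)\eta_\delta\ge t\,\omega^\delta_{\varphi_{t,\delta}}$ by (\ref{twisted-equation-t}), giving $\lambda_1\ge t$ via Lichnerowicz, and then propagate this positivity along the flow by applying a maximum principle to the evolution equation of the twisted Ricci tensor under (\ref{twisted-K-R-flow}). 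Granting this, Lemma \ref{holder-estimate-perturbation-flow} produces
\[
{\rm osc}_M\,\tilde\psi_{s,\delta}\le C\epsilon(\mu-t)^{\gamma/2},\quad s\in[(\mu-t)^{2\gamma},1].
\]

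For the remaining short-time range $s\in(0,(\mu-t)^{2\gamma})$, I integrate the pointwise bound $|\partial_s\tilde\psi_{s,\delta}|\le e^{\mu s}(\mu-t)\|\varphi_{t,\delta}\|_{C^0}$ from Lemma \ref{gradient-laplace-flow}(1) and use (\ref{t-condition}) to obtain
\[
|\tilde\psi_{s,\delta}|\le s\,e^{\mu}(\mu-t)\|\varphi_{t,\delta}\|_{C^0}\le e^{\mu}\epsilon(\mu-t)^{3\gamma/2}\le e^{\mu}\epsilon(\mu-t)^{\gamma/2},
\]
so the same oscillation bound holds throughout $(0,1]$. Finally, by Theorem \ref{perturbation-limit-from-flow} $\psi_{s,\delta}\to\tilde\phi_{t,s}$ (modulo additive constants) and by Theorem \ref{phi-delta-convergence} $\varphi_{t,\delta}\to\varphi_t$, both in H\"older norm; since constants cancel in the oscillation, the estimate passes to the limit to give (\ref{c0-perturbation-2-limit}). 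The main obstacle is the uniform positive lower bound on $\lambda_1$ for the flow metrics $\omega^\delta_{\psi_{s,\delta}}$ on the time interval $[0,1]$, independent of $\delta$; the remaining steps are a routine combination of the approximation results of Sections 3 and 5 with the $C^0$ gradient estimate of Section 4.
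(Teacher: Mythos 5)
Your main line coincides with the paper's: the oscillation estimate (\ref{c0-perturbation-2}) obtained in the proof of Lemma \ref{holder-estimate-perturbation-flow} in fact holds for all $s\in(0,1]$ --- the short-time range $s<(\mu-t)^{2\gamma}$ is covered exactly by your integration of the bound from Lemma \ref{gradient-laplace-flow}, part 1) --- and the lemma then follows by letting $\delta\to 0$ via Theorem \ref{phi-delta-convergence} and Theorem \ref{perturbation-limit-from-flow}, with additive constants cancelling in the oscillation. Transferring (\ref{t-condition}) to (\ref{t-condition-phi}) through the H\"older convergence of $\varphi_{t,\delta}$ is also what the paper does (it records this point in the proof of Lemma \ref{holder-estimate-perturbation}).

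The genuine problem is your proposed verification of the eigenvalue hypothesis (\ref{large-eigenvalue}). ``Propagating'' the twisted Ricci lower bound ${\rm Ric}(\omega^\delta_{\psi_{s,\delta}})-(1-\beta)\eta_\delta\ge t\,\omega^\delta_{\psi_{s,\delta}}$ forward in $s$ by a maximum principle applied to the evolution of the Ricci tensor does not work: lower bounds on Ricci are not preserved by the (twisted) K\"ahler--Ricci flow in complex dimension $\ge 2$, since the reaction term in the evolution equation for ${\rm Ric}$ involves the full curvature tensor and has no sign on the set of metrics with ${\rm Ric}\ge t g$; and even a quantitative short-time version would need curvature bounds that are not uniform in $\delta$, because the curvature of $\omega_\delta$ and $\kappa_\delta$ blows up near $D$ as $\delta\to 0$. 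The paper obtains (\ref{large-eigenvalue}) and (\ref{volume-condition}) by a different mechanism, carried out in the proof of Lemma \ref{holder-estimate-perturbation}: one first shows, by a continuity/contradiction argument combined with the Implicit Function Theorem, that the flow metrics remain uniformly equivalent to $\hat\omega$ (claim (\ref{metric-equivalence-t})), and then reads off the eigenvalue gap and the non-collapsing from this equivalence with the conic K\"ahler--Einstein geometry, not from curvature positivity along the flow. (The paper's own proof of the present lemma silently defers this hypothesis check to Lemma \ref{holder-estimate-perturbation}; your instinct that it must be addressed is correct, but the specific step you propose is the one that would fail.)
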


\begin{proof} In  (\ref{c0-perturbation-2}),    we in fact prove
$$ {\rm osc }_M \tilde\psi_{s,\delta}\le C\epsilon(\mu -t)^{\frac{\gamma}{2}}, ~\forall ~s\in (0,1].$$
Then  (\ref{c0-perturbation-2-limit}) follows immediately from Theorem \ref{phi-delta-convergence} and Theorem \ref{perturbation-limit-from-flow} by taking $\delta\to 0$.
 \end{proof}

\begin{lem}\label{holder-estimate-perturbation}Let  $\tilde v=\tilde v_{t,s}$  be a normalization of $h_{\tilde\phi_{t,s}}$ by adding a  suitable constant such  that
$$\int_M  \tilde v (\hat\omega_{\tilde \phi_{t,s }})^n=0.$$
Then for any $t $ and $\varphi_t$
satisfying (\ref{t-condition}),
it holds
\begin{align}\label{l2-v}
 \|\tilde v_{t,s}\|_{C^{\frac{1}{2}}(M)} \le
C\epsilon (1-t)^{\frac{\gamma}{2}}, ~\forall ~s\in  [(\mu -t)^{2\gamma}, 1].
\end{align}

\end{lem}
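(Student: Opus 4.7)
The plan is to pass the estimate in Lemma \ref{holder-estimate-perturbation-flow} to the limit $\delta \to 0$. First, I would establish convergence of the relevant objects: by Theorem \ref{phi-delta-convergence} and Theorem \ref{perturbation-limit-from-flow}, for each fixed $s \in (0,1]$ the flow metrics $\omega_{\psi_{s,\delta}}^\delta$ converge to $\hat\omega_{\tilde\phi_{t,s}}$ in the $C^{2,\alpha;\beta}$-sense and $\varphi_{t,\delta} \to \varphi_t$ in H\"older norm. Using the identity $h_{t,s,\delta} = -\frac{\partial}{\partial s}\psi_{s,\delta} + \mathrm{const}$, the analogous formula for the limit, and the $C^0$-bound from Lemma \ref{gradient-laplace-flow}, one sees that $v_{t,s,\delta}$ converges uniformly to $\tilde v_{t,s}$; the two normalization constants agree in the limit since all total volumes equal $V$.

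Second, I would verify that the hypotheses \eqref{large-eigenvalue} and \eqref{volume-condition} of Lemma \ref{holder-estimate-perturbation-flow} hold uniformly in $\delta \in (0,\delta_0]$ for $s \in [(\mu-t)^{2\gamma}, 1]$. The twisted flow \eqref{twisted-K-R-flow} gives a uniform Ricci lower bound (the twist $(\mu-t)\omega_\delta + (1-\beta)\eta_\delta$ is non-negative), which via a Lichnerowicz-type argument yields $\lambda_1 \ge \lambda_0 > 0$ uniformly. The volume non-collapsing follows from the uniform equivalence $C^{-1}\kappa_\delta \le \omega_{\psi_{s,\delta}}^\delta \le C\kappa_\delta$ in Lemma \ref{c2-parabolic-twisted-KR}, combined with the non-collapsing of the background metrics $\kappa_\delta$ (equivalently, the uniform Sobolev bound noted in the proof of Theorem \ref{perturbation-limit-from-flow}). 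Moreover, since $\varphi_{t,\delta} \to \varphi_t$ in $C^0$, the hypothesis \eqref{t-condition-phi} for the approximating problem follows from \eqref{t-condition} for all sufficiently small $\delta$, at the cost of a harmless factor.

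Third, with these hypotheses in hand, applying Lemma \ref{holder-estimate-perturbation-flow} yields
$$\|v_{t,s,\delta}\|_{C^{\frac{1}{2}}(M)} \le C\epsilon(\mu-t)^{\frac{\gamma}{2}}$$
for all $s \in [(\mu-t)^{2\gamma}, 1]$, with $C$ independent of $\delta$. The $C^{1/2}$-seminorm is lower semicontinuous under uniform convergence together with the $C^{2,\alpha;\beta}$-convergence of the underlying metrics, so letting $\delta \to 0$ gives the desired bound \eqref{l2-v} for $\tilde v_{t,s}$.

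The main obstacle I foresee is the uniform first-eigenvalue bound \eqref{large-eigenvalue}: one must control $\lambda_1$ independently of both $\delta$ and the potentially small time $s \ge (\mu-t)^{2\gamma}$, which requires exploiting the positivity of the twist in \eqref{twisted-K-R-flow} and checking that the Ricci lower bound passes through the Lichnerowicz inequality uniformly along the flow --- a delicate point given the conic nature of the limit. The remaining steps are essentially a repackaging of Theorems \ref{phi-delta-convergence} and \ref{perturbation-limit-from-flow} with Lemma \ref{holder-estimate-perturbation-flow}.
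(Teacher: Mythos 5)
Your overall skeleton --- apply Lemma \ref{holder-estimate-perturbation-flow} to the approximating flows $\omega^\delta_{\psi_{s,\delta}}$ and let $\delta\to 0$ --- is the same as the paper's, and the limiting step itself is handled as you describe. The genuine gap is in how you propose to verify the hypotheses (\ref{large-eigenvalue}) and (\ref{volume-condition}). You derive the eigenvalue bound from a Ricci lower bound along the twisted flow via Lichnerowicz; but the flow equation (\ref{twisted-K-R-flow}) only relates $\frac{\partial}{\partial s}\omega$ to ${\rm Ric}$, and it does not propagate the initial lower bound ${\rm Ric}(\omega^\delta_{\varphi_{t,\delta}})\ge t\,\omega^\delta_{\varphi_{t,\delta}}$ to positive times $s$ (lower Ricci bounds are not preserved under (twisted) K\"ahler--Ricci flow in general), so this step would fail --- precisely at the point you yourself flag as delicate. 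Likewise, the non-collapsing you extract from Lemma \ref{c2-parabolic-twisted-KR} carries a constant depending on $\|\psi_{s,\delta}\|_{C^0(M)}$ and hence on $\|\varphi_{t,\delta}\|_{C^0(M)}$, which is exactly the quantity allowed to be large under (\ref{t-condition}); the constants you obtain are therefore not uniform in the regime where the lemma is actually needed.

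The paper supplies the missing ingredient by a self-improving contradiction argument absent from your proposal: it first claims the absolute two-sided bound $\frac{1}{2}\hat\omega\le\hat\omega_{\tilde\phi_{t,s}}\le 2\hat\omega$, which, combined with the $C^{2,\alpha;\beta}$-convergence of Theorem \ref{perturbation-limit-from-flow}, yields both (\ref{large-eigenvalue}) and (\ref{volume-condition}) for all sufficiently small $\delta$. To prove the claim one assumes it fails, uses the continuous dependence of $\tilde\phi_{t,s}$ on $\varphi_t$ to select a borderline case in which the weaker equivalence $\frac{1}{4}\hat\omega\le\hat\omega_{\tilde\phi_{t,s}}\le 4\hat\omega$ still holds while $\|\tilde\phi_{t,s}\|_{C^{2,\alpha;\beta}(M)}\ge A_0$, runs the smoothing lemma with this weaker equivalence to obtain (\ref{l2-v}), and then applies the implicit function theorem to (\ref{limit-phi-delta-t-flow}) to force $\|\tilde\phi_{t,s}\|_{C^{2,\alpha;\beta}(M)}\le C(\epsilon)\to 0$ as $\epsilon\to 0$, contradicting the assumed lower bound $A_0$. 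This bootstrap is the heart of the lemma and must be added to your argument.
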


\begin{proof} We claim: for the metric $\hat\omega_{ \tilde\phi_{t,s}}$, it holds
\begin{align}\label{metric-equivalence-t}
\frac{1}{2} \hat\omega\le \hat\omega_{\tilde\phi_{t,s}}\le 2\hat\omega.
\end{align}

By the above  claim together Theorem \ref{perturbation-limit-from-flow}, we see  that  there exists a small $\delta_0$ such that the conditions (\ref{large-eigenvalue}) and (\ref{volume-condition})  in Lemma \ref{holder-estimate-perturbation-flow}
are satisfied for metrics $\omega_{\psi_{s,\delta}}^\delta$ with $\delta\in(0,\delta_0]$. Note that  (\ref{t-condition-phi})  also holds by Theorem \ref{phi-delta-convergence}.
Then by    Lemma \ref{c2-parabolic-twisted-KR},   it follows that   (\ref{holder-estimate-h-delta})  holds for $v=v_{t,s,\delta}$ with   $\delta\in (0, \delta_0]$. Taking the limit of $v_{t,s,\delta}$ as $\delta\to 0$, we get  (\ref{l2-v}).

We prove   (\ref{metric-equivalence-t}) by contradiction. If  (\ref{metric-equivalence-t}) is not true, then there exists a $\psi\in\mathscr{H}^{2,\alpha;\beta}(M,\omega_0)$ such that
the solution $\varphi_t$ of  (\ref{backward_MAE})  on $t$ satisfies (\ref{t-condition}) and the  $C^{2,\alpha;\beta}$-norm of  K\"ahler potential $\tilde\phi_{t,s}$ in Theorem \ref{perturbation-limit-from-flow} satisfies
\begin{align}\label{big-estimate}\|\tilde\phi_{t,s}\|_{C^{2,\alpha;\beta}(M)}\ge A_0,
\end{align}
where $A_0$ is a positive  number.
On the other hand, from the proof of Theorem \ref{perturbation-limit-from-flow}, the  $C^{2,\alpha;\beta}$-norm of  $\tilde\phi_{t,s}$ depends on $\varphi_t$ continuously. Thus we may aslso assume that
$$\|\tilde\phi_{t,s}\|_{C^{2,\alpha;\beta}(M)}\le 2A_0$$
and
\begin{align}\label{metric-equivalence-t-2}
\frac{1}{4}  \hat\omega\le \hat\omega_{\tilde\phi_{t,s}}\le  4\hat\omega.
\end{align}

Once  (\ref{metric-equivalence-t-2}) holds,  we can use the above argument again to
 conclude that  there exists a small $\delta_0$ such that   (\ref{holder-estimate-h-delta})  holds for $v=v_{t,s,\delta}$ with   $\delta\in (0, \delta_0]$. Taking the limit of $v_{t,s,\delta}$ as $\delta\to0$, we get  (\ref{l2-v})
 for   $\hat\omega_{\tilde\phi_{t,s}}$.
Applying  Implicity Function  Theorem to  (\ref{limit-phi-delta-t-flow}), we  obtain
\begin{align}\|\tilde\phi_{t,s}\|_{C^{2,\alpha;\beta}(M)}\le C(\epsilon)\to 0,~{\rm as}~\epsilon\to 0.\notag
\end{align}
But this is impossible by (\ref{big-estimate}). The claim is proved.

\end{proof}

\section{Properness of  $F_{\omega_0,\mu}(\cdot)$}

By using the estimates at  last section,  we can improve Theorem \ref{Bern_thm} to

\begin{theo}\label{tian-zhu-conic-ke-metric-weak}    Suppose that  there exists  a conic   K\"ahler-Einstein metric $\omega=\omega_{CKE}$ on $M$  along
$D$ with cone angle $2\pi\beta\in (0,2\pi).$ Then there exists two uniform constants $\delta$  and $C$ such that
\begin{align}\label{nonlinear-ms}F_{\omega_0,\mu}(\psi)\ge \delta I(\psi)^{\frac{1}{8n+9}} -C,~\forall~\psi\in \mathscr{H}^{2,\alpha; \beta}(M,\omega_0).
\end{align}
\end{theo}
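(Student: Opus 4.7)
The strategy adapts Tian's approach from [Ti97] to the conic setting, leveraging the continuity identity of Theorem \ref{Bern_thm} together with the smoothing/convergence estimates of Sections 4--5. Given $\psi\in\mathscr{H}^{2,\alpha;\beta}(M,\omega_0)$, set $\hat\omega=\omega_\psi$ and let $\{\varphi_t\}_{t\in[0,\mu]}$ solve (\ref{backward_MAE}). The cocycle identity for $F_{\omega_0,\mu}$ and the computation (\ref{integral-dt}) give
\[
F_{\omega_0,\mu}(\psi)\;=\;F_{\omega_0,\mu}(\phi)\,+\,\frac{1}{\mu}\int_0^\mu(I-J)_{\hat\omega}(\varphi_s)\,ds,
\]
so, in view of the lower bound in Theorem \ref{Bern_thm}, the conclusion reduces (setting $N:=I_{\omega_0}(\psi)$ and assuming $N$ large) to showing $\int_0^\mu(I-J)_{\hat\omega}(\varphi_s)\,ds\;\ge\;c_0\,N^{1/(8n+9)}$.

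I argue by contradiction. Suppose there is a sequence $\psi_k\in\mathscr{H}^{2,\alpha;\beta}(M,\omega_0)$ with $N_k\to\infty$ along which the integral is at most $\delta_k N_k^{1/(8n+9)}$ with $\delta_k\to 0$. The monotonicity of $s\mapsto(I-J)_{\hat\omega}(\varphi_s)$ proved inside Theorem \ref{Bern_thm}, combined with the equivalence $I_{\hat\omega}\sim (I-J)_{\hat\omega}$, yields $I_{\hat\omega_k}(\varphi_{t,k})\le C\delta_k N_k^{1/(8n+9)}/(\mu-t)$ for $t<\mu$. Together with the conic Green-formula bound ${\rm osc}\,\varphi\le C(I_{\hat\omega}(\varphi)+1)$ used in [JMR11] and the proof of Theorem \ref{Bern_thm}, this controls $\|\varphi_{t,k}\|_{C^0}$ on $(0,\mu)$. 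A choice $\tau_k=N_k^{-\theta}$, with $\theta$ tuned against $\gamma=1/(8n+8)$ so as to make the smoothing hypothesis (\ref{t-condition}) hold at $t_k^*=\mu-\tau_k$ with a parameter $\epsilon_k\to 0$, places us in the regime of Lemmas \ref{small-c0-puterbation-limit}--\ref{holder-estimate-perturbation}.

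By Lemma \ref{holder-estimate-perturbation}, $\|\tilde v_{t_k^*,s,k}\|_{C^{1/2}}\to 0$, so the smoothed metric $\hat\omega_k+\sqrt{-1}\partial\bar\partial\tilde\phi_{t_k^*,s,k}$ has its twisted Ricci potential tending to zero. The Implicit Function Theorem applied to (\ref{limit-phi-delta-t-flow}) at the conic K\"ahler--Einstein solution---valid because the no-holomorphic-vector-field hypothesis, via Donaldson's linear theory [Do11], gives invertibility of $\Delta+\mu$ on $C^{2,\alpha;\beta}$---then forces this smoothed metric to converge in $C^{2,\alpha;\beta}$ to $\omega_\phi$. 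Lemma \ref{small-c0-puterbation-limit} propagates the closeness to $\omega_{\varphi_{t_k^*,k}}$, and integrating the ODE $\dot\varphi_t=-(\Delta_t+t)^{-1}\varphi_t$ along the continuity path---whose operator norm is uniformly controlled by the eigenvalue bound $\lambda_1(\Delta_t)>t$ from the proof of Theorem \ref{Bern_thm}---transports this closeness back to $\hat\omega_k=\omega_{\psi_k}$. Hence $\omega_{\psi_k}$ converges to $\omega_\phi$, so $N_k$ stays bounded, contradicting $N_k\to\infty$.

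The principal obstacle lies in the uniqueness step: quantitatively converting $C^{1/2}$-smallness of the twisted Ricci potential into $C^{2,\alpha;\beta}$-closeness of the smoothed conic metric to $\omega_\phi$, and then back-propagating this closeness along the entire continuity path. This plays the role of Bando--Mabuchi uniqueness in the conic setting and rests on Donaldson's linear theory together with the no-tangent-holomorphic-vector-field assumption. The explicit power $1/(8n+9)$ emerges from optimizing the dichotomy parameter $\theta$ against the smoothing threshold (\ref{t-condition}), which is governed by $\gamma=1/(8n+8)$.
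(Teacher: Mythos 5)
Your overall skeleton---the identity $F_{\omega,\mu}(\psi-\phi)=\frac{1}{\mu}\int_0^\mu(I-J)_{\hat\omega}(\varphi_s)\,ds$, monotonicity of $s\mapsto(I-J)_{\hat\omega}(\varphi_s)$, a choice of $t$ near $\mu$ tuned so that the smoothing hypothesis (\ref{t-condition}) holds, and then Lemmas \ref{small-c0-puterbation-limit}--\ref{holder-estimate-perturbation} plus the Implicit Function Theorem at (\ref{limit-phi-delta-t-flow}) to control ${\rm osc}_M(\varphi_t-\varphi_\mu)$---is exactly the paper's. But your contradiction framing introduces a step that does not work. You claim that, once $\omega_{\varphi_{t_k^*}}$ is close to $\omega_\phi$, you can ``integrate the ODE $\dot\varphi_t=-(\Delta_t+t)^{-1}\varphi_t$ along the continuity path'' to transport this closeness back to $\hat\omega_k=\omega_{\psi_k}$ and conclude $N_k$ is bounded. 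This fails for two reasons: $\omega_{\psi_k}$ is the \emph{background} metric of the path (\ref{backward_MAE}), not a point on it (the path runs from a Calabi--Yau-type solution at $t=0$ to $\omega_\phi$ at $t=\mu$), and over the interval $[0,t_k^*]$ the right-hand side $(\Delta_t+t)^{-1}\varphi_t$ carries no smallness whatsoever (the $\varphi_t$ can be of size $N_k$), so integration gives nothing. No such transport is needed: the paper closes the estimate purely at the level of functionals, via the chain (\ref{f-inequiality}), $F\ge\frac{\mu-t}{\mu}(I-J)_{\hat\omega}(\varphi_t)\ge\frac{\mu-t}{n\mu}\bigl(J_{\hat\omega}(\varphi_\mu)-{\rm osc}_M(\varphi_t-\varphi_\mu)\bigr)$, which turns the oscillation bound directly into the lower bound $c(\mu-t)I_{\hat\omega}(\varphi_\mu)-C$, with the power of $I$ coming from how small $\mu-t$ must be. (As a side effect, the exponent bookkeeping in your contradiction version does not close for $a=\tfrac{1}{8n+9}$: you need both $\tau_k^{\gamma/2}\delta_kN_k^{a}\le\epsilon_k$ and $\delta_kN_k^{a}\tau_k^{-1}\ll N_k$, which force $a(1+2/\gamma)\le 1$.)

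The second gap is your use of ``the conic Green-formula bound ${\rm osc}\,\varphi\le C(I(\varphi)+1)$'' to control $\|\varphi_{t,k}\|_{C^0}$ uniformly in terms of $I_{\omega_0}(\psi_k)$. That bound requires a uniform Green-function estimate for the reference metric, which holds for the path metrics $\omega_t$ (since ${\rm Ric}(\omega_t)>t\,\omega_t$ on $M\setminus D$) but \emph{not} for an arbitrary $\psi\in\mathscr{H}^{2,\alpha;\beta}(M,\omega_0)$; in general ${\rm osc}_M\psi$ is not bounded by $I_{\omega_0}(\psi)$. This is precisely why the paper first proves (\ref{special-case}) under the extra hypothesis (\ref{osc-condition}), and then removes it by the [TZ00] trick: apply the special case to the path solutions $\varphi_t-\varphi_\mu$ with $t\ge\epsilon_0$ (for which the Green formula \emph{is} available), deduce a bound on ${\rm osc}_M(\varphi_{t_0}-\varphi_\mu)$ for a suitable $t_0$, and feed that back into (\ref{f-inequiality}). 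Your proposal omits this reduction entirely, so the verification of (\ref{t-condition}) for a general $\psi$ is unjustified.
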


\begin{proof}

First,  by  the first relation in  (\ref{integral-dt}),  we get an identity
 $$F_{\omega,\mu}(\psi-\phi)=F_{\omega,\mu}(-\varphi_\mu)=\frac{1}{\mu}\int_0^\mu(I-J)_{\hat{\omega}}(\varphi_s)ds.$$
Then as in [TZ00], [CTZ05],  we obtain
\begin{align}\label{f-inequiality}&F_{\omega,\mu}(\psi-\phi)\notag\\
&\ge \frac{1}{\mu}(\mu-t)(I-J)_{\hat\omega}(\varphi_t)\notag\\
&\ge \frac{1}{n\mu}(\mu-t)J_{\hat\omega}(\varphi_t)\notag\\
&\ge  \frac{1}{n\mu}(\mu-t)J_{\hat\omega}(\varphi_\mu)   -  \frac{1}{n\mu}(\mu-t) {\rm osc}_M(\varphi_t-\varphi_\mu)\notag\\
&\ge  \frac{1}{n\mu(n+1)}(\mu-t)I_{\hat\omega}(\varphi_\mu)   -  \frac{1}{n\mu}(\mu-t) {\rm osc}_M(\varphi_t-\varphi_\mu).
\end{align}

Next,   for a small $\epsilon$,  we choose  a $t$
such that
\begin{align}\label{special-condition} (\mu-t)^{1+\frac{\gamma}{2}}\|\varphi_t\|_{C^0(M)}=\epsilon.
\end{align}
 Without loss of generality, we may assume that the above inequality can be obtained,
  otherwise $\|\varphi_t\|_{C^0(M)}$ is  unifromly bonuded and  the situation will be simple.
   Then by   Theorem \ref{perturbation-limit-from-flow} and  Lemma \ref{small-c0-puterbation-limit},   there exists  a  $C^{2,\alpha'; \beta}$ K\"ahler potential  $\tilde\varphi_t$ such that
\begin{align}{\rm osc}_M(\varphi_t-\varphi_\mu)&  \le {\rm osc}_M (\tilde\varphi_t -\varphi_\mu)+ {\rm osc}_M( \tilde \varphi_t -\varphi_t)\notag\\
&\le {\rm osc}_M (\tilde\varphi_t -\varphi_\mu)+  C\epsilon(\mu -t)^{\frac{\gamma}{2}}.
\notag
\end{align}
On the other hand,   by   Lemma \ref{holder-estimate-perturbation},  we can apply  Implicity Function  Theorem to  (\ref{limit-phi-delta-t-flow}) to get
 $${\rm osc}_M (\tilde\varphi_t -\varphi_\mu)\le C(\epsilon)\to 0,~{\rm as}~ \epsilon\to 0.$$
 Thus
\begin{align}\label{osc-phi-t-phi-mu}{\rm osc}_M(\varphi_t-\varphi_\mu)\le C(\epsilon).
\end{align}

Combining (\ref{f-inequiality}) and (\ref{osc-phi-t-phi-mu}), we have
 \begin{align}\label{f-proper-1}F_{\omega,\mu}(\psi-\phi)\ge  \frac{1}{n\mu(n+1)}
(\mu-t)I(\varphi_\mu)-C.
\end{align}
 Note
 $$\|\varphi_t\|_{C^0(M)}\le {\rm osc}_M(\varphi_t).$$
Then by (\ref{osc-phi-t-phi-mu}), we get
  \begin{align}\label{osc-phi-t}\|\varphi_t\|_{C^0(M)}\le {\rm osc}_M(\varphi_\mu)+1.
\end{align}

In a special case, we assume that the K\"ahler potential $\psi$ satisfies
\begin{align}\label{osc-condition}
{\rm osc}_M\psi\le I_{\omega_0}(\psi)+C_0,
\end{align}
where $C_0$ is a uniform constant.
Then by the relation (\ref{special-condition}) and (\ref{osc-phi-t}), a simple computation shows
\begin{align}\label{special-case-1} F_{\omega,\mu}(\psi-\phi)&\ge \delta
I_{\omega_0}(\varphi_\mu)^{\frac{1}{8n+9}}-C'\notag\\
&\ge \delta
I_{\omega_0}(\psi)^{\frac{1}{8n+9}}-C',
\end{align}
 where $\delta,  C'>0$ are two uniform constants which depending only on the choice of $\epsilon$ in (\ref{special-condition}).   Using  the cocycle condition in  (\ref{cocycle-condition}), we derive immediately,
\begin{align}\label{special-case}F_{\omega_0,\mu}(\psi)\ge \delta
I_{\omega_0}(\psi)^{\frac{1}{8n+9}}-C''.
\end{align}
In  general case, we can use a trick in [TZ00] to derive (\ref{special-case}) for $\psi$. In fact, we can first  apply  (\ref{special-case})  for solutions $\varphi_t$ with $t\ge\epsilon_0>0$ to get an estimate for
${\rm osc}_M(\varphi_t-\varphi_\mu)$,  then by  the relation in  (\ref{f-inequiality})
 we   obtain  (\ref{special-case}) for $\psi$.

\end{proof}

\begin{proof}[End of proof of  Theorem  \ref{tian-zhu-conic-ke-metric}]Theorem  \ref{tian-zhu-conic-ke-metric} is an improvement of Theorem \ref{tian-zhu-conic-ke-metric-weak}.  By Lemma \ref{functional-smoothing},
 we  suffice  to obtain  the esitimate (\ref{proper-inequality}) in Theorem  \ref{tian-zhu-conic-ke-metric} for
K\"ahler potentials in $\mathscr{H}^{2,\alpha; \beta}(M,\omega_0)$.
 It  was observed by Phong-Song-Strum-Weinkove that (\ref{proper-inequality})  can  come from  (\ref{nonlinear-ms})
in case of K\"ahler-Einstein metric [PSSW08].   In fact, as in [TZ00], by   (\ref{nonlinear-ms}) for  solutions $\varphi_t$ with $t\ge\epsilon_0>0$, they further show that there exists a $t_0$ with $\mu-t_0\ge \delta_0>0$ (where $\mu=1$)
  for some  uniform constant $\delta_0$  such that
$${\rm osc}_M(\varphi_{t_0}-\varphi_\mu)\le A,$$
where $A$ is  a uniform constant which depends only on  the K\"ahler-Einstein metric.  We show  that such choice of $t_0$ can be done similarly  in our case of  conic K\"ahler-Einstein metric $\omega_\phi$ as follows.

By  the first  relation in (\ref{integral-dt}) together with  the equation (\ref{backward_MAE}), we have
\begin{align} &F_{\omega,\mu}(\varphi_t-\varphi_\mu)=F_{\hat\omega,\mu}(\varphi_t)-F_{\hat\omega,\mu}(\varphi_\mu)\notag\\
&= -\frac{1}{t}\int_0^t (I-J)_{\hat{\omega}}(\varphi_s)ds+\frac{1}{\mu}\int_0^{\mu} (I-J)_{\hat{\omega}}(\varphi_s)ds -\frac{1}{\mu} \log(\frac{1}{V}\int_M e^{ (t-\mu)\varphi_t}\hat\omega_{\varphi_t}^n)\notag\\
&\le-\frac{1}{t}\int_0^t (I-J)_{\hat{\omega}}(\varphi_s)ds+\frac{1}{\mu}\int_0^{\mu} (I-J)_{\hat{\omega}}(\varphi_s)ds+ \frac{\mu-t}{\mu V}\int_M  \varphi_t\hat\omega_{\varphi_t}^n.\notag
\end{align}
Note that  the first  relation in (\ref{integral-dt}) is equivalent to
$$- \frac{1}{V} \int_M \varphi_t\hat\omega_{\varphi_t}^n= (I-J)_{\hat{\omega}}(\varphi_t)-\frac{1}{t} \int_0^t  (I-J)_{\hat{\omega}}(\varphi_s)ds.$$
It follows
\begin{align}\label{f-upper-bound} F_{\omega,\mu}(\varphi_t-\varphi_\mu)&\le  \frac{1}{\mu} \int_t^{\mu}  (I-J)_{\hat{\omega}}(\varphi_s)ds -\frac{\mu-t}{\mu} (I-J)_{\hat{\omega}}(\varphi_t) \notag\\
 &\le \frac{\mu-t}{\mu} [(I-J)_{\hat{\omega}}(\varphi_\mu) - (I-J)_{\hat{\omega}}(\varphi_t)]\notag\\
&\le \frac{n(\mu-t)}{\mu}{\rm osc}_M(\varphi_t-\varphi_\mu).
\end{align}

On the other hand,  by the Green formula in [JMR11],  (\ref{osc-condition}) holds for $\varphi_t-\varphi_\mu$ whenever $t\ge \epsilon_0>0$
 since  Ricci curvature of $\hat\omega_{\varphi_t}=\omega+\sqrt{-1}\partial\bar\partial (\varphi_t-\varphi_\mu)$  is strictly positive.
Then applying  (\ref{special-case-1}) for $\varphi_t-\varphi_\mu$, we see that there exist two constants $A_0, C>0$  such that
$$F_{\omega,\mu}(\varphi_t-\varphi_\mu)\ge  A_0
I_\omega(  \varphi_t-\varphi_\mu)^{\frac{1}{8n+9}}-C, ~\forall ~t\ge\epsilon_0.$$
Combining this with (\ref{f-upper-bound}), we derive
\begin{align}\label{i-functional-relation}
I_{\omega} ( \varphi_t-\varphi_\mu)^{\frac{1}{8n+9}} [A_0-\frac{n(\mu-t)}{\mu} I_{\omega} (  \varphi_t-\varphi_\mu)^{1-\frac{1}{8n+9}}]\le C'.
\end{align}

Case 1:  For any $t\in [\frac{\mu}{2},\mu]$,  it holds
 $$(\mu-t) I_{\omega} (  \varphi_{t}-\varphi_\mu)^{1-\frac{1}{8n+9}}<\frac{A_0\mu}{2n}.$$
Then   we can choose $t=t_0=\frac{\mu}{2}$ so that
$ I_{\omega} (  \varphi_{t_0}-\varphi_\mu)$,  and also ${\rm osc}_M( \varphi_{t_0}-\varphi_\mu)$ is uniformly bounded.
 Thus by  the relation in (\ref{f-inequiality}), we get
 (\ref{proper-inequality}).  The proof is finished.

Case 2:  There exists a $t_0\in [\frac{\mu}{2},\mu]$ such that
  \begin{align}\label{t-condition-2}  (\mu-t_0) I_{\omega} (  \varphi_{t_0}-\varphi_\mu)^{1-\frac{1}{8n+9}}=\frac{A_0\mu}{2n}.
\end{align}
By the  above choice of $t_0$, from (\ref{i-functional-relation})  it is easy to see that   ${\rm osc}_M( \varphi_{t_0}-\varphi_\mu)$ is uniformly bounded.
Again by  (\ref{t-condition-2}), we get $\mu-t_0\ge \delta_0>0$ for some  uniform constant $\delta_0$. The theorem is proved.

There is another way to get (\ref{proper-inequality}) by using the Donaldson's openness   theorem,  Theorem \ref{Do_openness} in next section. This is observed in [LS12].

\end{proof}

\section{A new proof of Donaldson's openness   theorem}

In this section, we apply Theorem \ref{tian-zhu-conic-ke-metric-weak}  to  prove  the following Donaldson's openness   theorem.

\begin{theo}\label{Do_openness}Let $D$ be a smooth divisor of a Fano manifold $M$ with $[D]\in \lambda c_1(M)$ for some $\lambda>0$ such  that there is no non-zero holomorphic field
which is tangent to $D$ along $D$. 
Suppose that there exists  a conic  K\"ahler-Einstein metric on $M$ with  cone angle $2\pi\beta_0\in (0,2\pi)$ along $D$. 
 Then for any $\beta$ close to $\beta_0$ there exists a conic K\"ahler-Einstein metric  with  cone angle $2\pi\beta$.
\end{theo}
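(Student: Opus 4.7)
The plan is to combine the smooth approximation scheme of Sections 3--5 with the properness statement Theorem \ref{tian-zhu-conic-ke-metric-weak}, thereby replacing the use of Donaldson's $C^{2,\alpha;\beta}$ Schauder estimates. Writing $\mu(\beta)=1-(1-\beta)\lambda$, for each $\beta$ close to $\beta_0$ and each small $\delta>0$ I would consider the smoothed complex Monge-Amp\`ere equation
\begin{equation*}
(\omega_\delta+\sqrt{-1}\partial\bar\partial\varphi)^n=e^{h_{\delta,\beta}-\mu(\beta)\varphi}\omega_\delta^n,
\end{equation*}
exactly as in Section 3 but now varying $\beta$, where $\omega_\delta$ is Yau's smoothing of $\hat\omega=\omega_\psi$ and $h_{\delta,\beta}$ is the twisted Ricci potential associated to $(1-\beta)\eta_\delta$, with $\eta_\delta=\lambda\omega_0+\sqrt{-1}\partial\bar\partial\log(\delta+|S|^2)$. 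As $\delta\to 0$ such solutions should converge to a conic K\"ahler-Einstein metric with cone angle $2\pi\beta$.

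First, for $\beta=\beta_0$ and each small $\delta$, the equation is solvable; this is the $t=\mu$ case worked out in Section 3. At the resulting smooth solution $\varphi_{\delta,\beta_0}$ the linearization is $\Delta_{\varphi_{\delta,\beta_0}}+\mu(\beta_0)$, and since the underlying metric is a twisted K\"ahler-Einstein metric satisfying ${\rm Ric}(\omega_{\varphi_{\delta,\beta_0}})\ge\mu(\beta_0)\omega_{\varphi_{\delta,\beta_0}}$ with strict inequality on the support of the positive twist, a Bochner computation (using the tangential holomorphic field hypothesis to pass to the limit) shows $\lambda_1>\mu(\beta_0)$, so the kernel is trivial. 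The smooth Implicit Function Theorem then produces, for each $\delta$, smooth solutions $\varphi_{\delta,\beta}$ for all $\beta$ in some neighborhood of $\beta_0$.

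The heart of the argument is to show that this neighborhood can be chosen independent of $\delta$. Uniform $C^2$ estimates follow from the Chern-Lu inequality combined with the Guenancia-Paun inequality (\ref{gp-inequality}), exactly as in Lemma \ref{c2-estimate-twisted-metrics-lemma}, provided the potentials are $C^0$-bounded. For the uniform $C^0$ bound I would invoke Theorem \ref{tian-zhu-conic-ke-metric-weak} applied at $\beta_0$, namely
\begin{equation*}
F_{\omega_0,\mu(\beta_0)}(\varphi)\ge\delta_1 I_{\omega_0}(\varphi)^{1/(8n+9)}-C.
\end{equation*}
By Lemma \ref{functional-smoothing} and the continuous dependence of $\mu(\beta)$ on $\beta$, for $|\beta-\beta_0|$ sufficiently small the log $F$-functional $F_{\omega_0,\mu(\beta)}$ inherits a comparable properness, and since $\varphi_{\delta,\beta}$ is a critical point of the associated smoothed energy one obtains a uniform bound on $I_{\omega_0}(\varphi_{\delta,\beta})$, hence on ${\rm osc}_M\varphi_{\delta,\beta}$ via the Green formula of [JMR11].

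Finally, with uniform $C^0$ and $C^2$ estimates secured, Kolodziej's H\"older estimate and standard interior regularity away from $D$ allow one to extract a subsequential limit $\varphi_\beta$ as $\delta\to 0$. This limit is a bounded H\"older-continuous weak solution of
\begin{equation*}
(\omega_0+\sqrt{-1}\partial\bar\partial\varphi_\beta)^n=\frac{1}{|S|^{2(1-\beta)}}e^{h_0-\mu(\beta)\varphi_\beta}\omega_0^n,
\end{equation*}
and the $C^{2,\alpha;\beta}$ regularity theorem of [JMR11] upgrades it to a genuine conic K\"ahler-Einstein metric with cone angle $2\pi\beta$, giving Theorem \ref{Do_openness}. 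I expect the main obstacle to be the uniform $C^0$ estimate: transferring the properness of Theorem \ref{tian-zhu-conic-ke-metric-weak} from the exact angle $\beta_0$ to nearby $\beta$ with constants uniform in \emph{both} $\delta$ and $\beta$ requires careful tracking of how the factor $1/|S|^{2(1-\beta)}$ and the exponent $\mu(\beta)$ enter the $F$-functional, and of how the Section 3 smoothing preserves the relevant functional inequalities.
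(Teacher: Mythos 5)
Your overall scheme---smooth via $\omega_\delta$ and the twist $\eta_\delta$, solve the smoothed equation, obtain uniform $C^0$ and $C^2$ estimates, and pass to the limit $\delta\to 0$ with the $C^{2,\alpha;\beta}$ regularity theorem of [JMR11]---matches the paper's strategy, and the $C^2$ estimate via Lemma \ref{c2-estimate-twisted-metrics-lemma} and the final regularity step are handled in the same way. But there is a genuine gap in the uniform $C^0$ estimate, which is precisely the obstacle you flag at the end and do not close. The assertion that properness of $F_{\omega_0,\mu(\beta_0)}$ ``inherits'' to $F_{\omega_0,\mu(\beta)}$ for nearby $\beta$ ``by continuity of $\mu(\beta)$'' does not follow: the two functionals differ both in the $1/\mu$ prefactor and in the singular weight $|S|^{-2(1-\beta)}$ and the exponent $e^{-\mu\psi}$, and on the non-compact space $\mathcal H(M,\omega_0)$ a small change of parameters can a priori destroy a coercivity inequality. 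Lemma \ref{functional-smoothing} does not help here; it only approximates potentials within a fixed functional, and carries no information across different $\mu$. Also note that solvability of the smoothed equation at $t=\mu_0$ for a fixed $\delta$ is not ``worked out in Section 3'' as you state; Section 3 only covers $t\in(0,\mu)$, and the $t=\mu_0$ case already consumes the properness input from Theorem \ref{tian-zhu-conic-ke-metric-weak}.

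The paper closes the gap by a different, more indirect route. First it fixes one value $\delta=\delta_0/2$ and applies the smooth Implicit Function Theorem to produce \emph{twisted} K\"ahler-Einstein metrics for $\mu\in[\mu_0,\mu_0+\epsilon_0)$ (the hypothesis of no tangential holomorphic fields enters at this stage to guarantee invertibility of the linearized operator in the limit). It then invokes the theorem of X.~Zhang and X.W.~Zhang [ZhZ13]: a Fano manifold admitting a twisted K\"ahler-Einstein metric has a \emph{proper} twisted $F$-functional. This is the key device that converts existence at a single $\delta$ into a Moser--Trudinger--type inequality at the new $\mu$. Finally, the paper observes that for fixed $\mu$ the twisted functionals $F_{\mu,\delta}$ and $F_{\mu,\delta_0/2}$ differ by an amount controlled by $\|\Psi_\delta\|_{C^0(M)}$ and $\|\Psi_{\delta_0/2}\|_{C^0(M)}$, which is uniformly bounded by Kolodziej's estimate; hence properness spreads to all $\delta\in(0,\delta_0)$ simultaneously, yielding the uniform $C^0$ bound you need before the $C^2$ estimate and the $\delta\to0$ limit can be taken. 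Without this bootstrapping step your argument does not produce a $\delta$-independent $C^0$ bound, and the passage to the limit cannot be carried out.
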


\begin{proof}  Let $\mu_0= 1-\lambda(1-\beta_0)$.   Then  $F_{\omega_0,\mu_0}(\cdot)$ is proper by Theorem \ref{tian-zhu-conic-ke-metric-weak}. Thus
   twisted $F$-functionals $F_{\mu_0,\delta}(\cdot)$ defined by (\ref{twisted-f-functional-t})  are all proper for
any $\delta\in (0,\delta_0]$.  By the argument in Section 3,   it follows  that  there exists a solution of  (\ref{smooth-continuity-modified})  on $t=\mu_0$ for any  $\delta\in (0,\delta_0]$.  Hence,  for a fixed $\delta=\frac{\delta_0}{2}$, we
apply  Implicit  Function  Theorem to see that there exists an $\epsilon_0$ such that (\ref{smooth-continuity-modified})   is  solvable for any  $\mu\in [\mu_0,\mu_0+\epsilon_0)$. Note that  the  twisted Ricci potential
  $h_\delta$ of $\omega_\delta$
in  (\ref{smooth-continuity-modified}) satisfies
(\ref{twisted-h-delta}) with $\beta=1- \frac{1-\mu}{\lambda}$.
This means that there exists a twisted K\"ahler-Einstein metric
associated to positive $(1,1)$-form $\Omega=  (1-\beta)\eta_{\frac{\delta_0}{2}}$ for any $\mu\in [\mu_0,\mu_0+\epsilon_0)$.   By a result of  X. Zhang and X. W. Zhang [ZhZ13],  the twisted $F$-functionals $F_{\mu,\frac{\delta_0}{2}}(\cdot)$ is  proper for any $\mu\in [\mu_0,\mu_0+\epsilon_0)$. In fact,  they prove a version of Theorem \ref{tian-zhu-conic-ke-metric-weak} on a  Fano manifold  which admits a twisted K\"ahler-Einstein metric.

 By  a direct computation,  it is easy to see that for any $\delta\in (0,\delta_0)$ it holds
$$|F_{\mu,\frac{\delta_0}{2}}(\psi)-F_{\mu,\delta}(\psi)|\le C(\|\Psi_{\delta}\|_{C^0(M)}, \|\Psi_{\frac{\delta_0}{2}}\|_{C^0(M)})\le C, ~\forall~\psi\in  \mathcal H(M,\omega_0).$$
This implies that $ F_{\mu,\delta}(\cdot)$ are all proper for any  $\delta\in (0,\delta_0)$ and $\mu\in [\mu_0,\mu_0+\epsilon_0)$.
Thus by the argument in Section 3,  we see that  there exists a solution $\varphi_{\mu, \delta}$
 of  (\ref{smooth-continuity-modified})  on $t=\mu\in  [\mu_0,\mu_0+\epsilon_0)$ for any  $\delta\in (0,\delta_0)$.
Moreover,
$${\rm osc}_M\varphi_{\mu, \delta}\le C,$$
  where  $C$   is a uniform constant   independent of $\mu$ and $\delta$.
On the other hand,  the estimate (\ref{c2-estimate-twisted-metrics})  in Lemma \ref{c2-estimate-twisted-metrics-lemma}  also holds for metrics $\omega_{\varphi_{\mu, \delta}}^\delta$.
 By taking a sequence $\delta_i\to 0$,   $\varphi_{\mu,\delta_i}$ converge   to a H\"older continueous function $\varphi_\infty -\psi$  which satisfies the   weak conic K\"ahler-Einstein metric equation,
$${\rm Ric}(\omega_{\varphi_\infty})=\mu\omega_{\varphi_\infty}+2\pi  (1-\beta)[D], ~\mu\in [\mu_0, \mu_0+\epsilon_0)$$
with property:
$$C^{-1}  \hat\omega\le \omega_{\varphi_\infty} \le C \hat\omega, ~{\rm in}~ M\setminus D$$
for some uniform positive number  $C$.
By  the regularity theorem in [JMR11],   $\omega_{\varphi_\infty}$ is a conic K\"ahler-Einstein metric in sense of $C^{2,\alpha;\beta}$ K\"ahler potentials.
The proof of  Theorem \ref{Do_openness} is completed.

\end{proof}

\vskip20mm

\section*{References}

\small

\begin{enumerate}

\renewcommand{\labelenumi}{[\arabic{enumi}]}

\bibitem{Au83}[Au84]  Aubin, T.,  R\'eduction du cas positif de l'equation de Monge-Amp\`ere sur
les varietes Kahleriennes compactes \`{e} la demonstration d'une inegalite,
J. Funct. Anal., 57 (1984), 143-153.

\bibitem{Be11}[Be11]  Berman, R.,   A thermodynamical formalism for Monge-Amp\`ere
equations, Moser-Trudinger inequalities and K\"ahler-Einstein metrics,
Advances in Math.,  248 (2013), 1254-1297.

\bibitem{Bo11}[Bo11]  Berndtsson, B.,  Brunn-Minkowski type inequality for Fano manifolds
and the Bando-Mabuchi uniqueness theorem,  Preprint, arXiv:1103.0923.

\bibitem{Br11}[Br11]  Brendle, S., Ricci flat K\"ahler metrics with edge singularities,
arXiv:1103.5454, to appear in Int. Res. Math. Notices.

\bibitem{Ch00} [Ch00]  Chen,  X. X.,  The spaces of K\"{a}hler  metrics,   J.  Diff.  Goem.,    56  (2000), 189-234.

\bibitem{Cher68} [Cher68]  Chern,  S. S.,  On holomorphic mappings   of Hermitian manifolds of same dimension,  Proc.  Symp. Pure Math., 11, Amer. Math. Soci., 1968, 157-170.

\bibitem{CDS13} [CDS13]  Chen,  X.X., Donaldson, S., Sun, S.,  K\"{a}hler-Einstein metrics on Fano manifolds I, II, III,  J. Amer. Math. Soc. 28 (2015), 183-197, 199-234, 235-278.

\bibitem{CS12}[CS12]   Collins T. and Szekelyhihi, G.,    The  twisted K\"ahler-Ricci flow,
\newline  arXiv:1207.5441v1.

\bibitem{CTZ05}[CTZ05]  Cao, H.D., Tian, G., and Zhu, X.H.,
      K\"ahler-Ricci solitons on compact K\"ahler manifolds with
      $c_1(M)>0$,  Jour Geom and Funct. Anal.,   15 (2005)
      697-619.

\bibitem{Di88}[Di88]  Ding, W.,  Remarks on the existence problem of positive K\"ahler-Einstein metrics,  Math. Ann. 282 (1988), 463-471.

\bibitem{Do98}[Do98]  Donaldson, S.,  Symmetric spaces, K\"ahler geometry and Hamiltonian dynamics,   Northern California Symplectic Geometry Seminar, 13-33.

\bibitem{Do02}[Do02] Donaldson, S.,  Scalar curvature and stability of toric varieties,  J. Diff.
Geom., 62 (2002), 289-349.

\bibitem{Do11}[Do11]  Donaldson, S., K\"ahler metrics with cone singularities along a divisor,
Preprint, arXiv:1102.1196.

\bibitem{DT92}[DT92]  Ding, W. and Tian, G.,  The generalized Moser-Trudinger inequality, Nonlinear Analysis and Microlocal Analysis,  Proceedings of the International
Conference at Nankai Institute of Mathematics (K.C. Chang et
al., Eds.), World Scientific, 1992, 57-70.

\bibitem{GP13}[GP13]  Guenancia H. and Paun H., ,   Conic  singularities metrics with perscribed Ricci curvature: the case of general cone  angles   along  normal crossing  divisors, arXiv: 1307.6375.
Preprint, arXiv:1102.1196.

\bibitem{JMR11}[JMR11]  Jeffres, T., Mazzeo, R. and Rubinstein, Y.,  K\"ahler-Einstein metrics
with edge singularities,  arXiv:1105.5216,   to appear in Ann. Math.

\bibitem{Kol08}[Kol08]  Kolodziej, S,   H\"older continuity of solutions  to the  complex Monge-Amp\`{e}re equation with the righ-hand side in Lspp: the case of compact K\"ahler manifolds,   Math. Ann (2008), 379--386.

\bibitem{Lu68}[Lu68]  Lu,  Y. C.,  Holomorphic mappings of complex manifolds,   J.  Diff.  Goem.,    2  (1968),  299-312.

\bibitem{LS12}[LS12]  Li, Chi and Sun, Song,  Conic K\"ahler-Einstein metrics revisited,  Preprint,
arXiv:1207.5011.

\bibitem{LZ14}[LZ14]   Liu, J.W. and Zhang, X.,    The conical K\"ahler-Ricci flow on Fano manifolds, preprint.

\bibitem{PSSW08}[PSSW08]  Phong D.H., Song, J., Sturm J.  and  Weinkove, B.,    The Moser-Trudinger inequality on  K\"ahler-Einstein manifolds, Amer. J. Math., 130 (2008), 1067-1085.

\bibitem{Sh14}[Sh14]  Shen, L.,   Smooth approximation of conic K\"ahler metric with lower Ricci curvature bound,  Preprint, arXiv:1406.0222v1.

\bibitem{Se92}[Se92]  Semmes S.,     Complex Monge-Amp\`ere equations and symplectic manifolds,  Amer. J. Math., 114 (1992),  495-550.

\bibitem{SW12}[SW12]  Song, J.  and Wang, X.W.,  The greatest Ricci lower bound, conical Einstin metrics and the Chern number inequality, arXiv:1207.4839v1.

\bibitem{ST12}[ST12] Song, J. and Tian, G.,  Canonical measures and K\"ahler-Ricci flow,  J. Amer. Math. Soc., 20 (2012), no. 3,  303-353.

\bibitem{Ti87}[Ti87] Tian, G., On K\"ahler-Einstein metrics on certain K\"ahler manifolds with $C_1(M)>0$,  Invent. Math. 89 (1987), no. 2, 225-246.

\bibitem{Ti97}[Ti97]  Tian, G.,  K\"ahler-Einstein metrics with positive scalar curvature,  Invent. Math., 130 (1997), 1-39.

\bibitem{Ti12}[Ti12]  Tian, G.,   $K$-stability and K\"ahler-Einstein metrics,   Preprint, arXiv:1211.4669.

\bibitem{TZ00}[TZ00] Tian, G. and Zhu, X.H.,     A nonlinear inequality of  Moser-Trudinger type,  Cal. Var. PDE, 10 (2000), 349-354.

\bibitem {Yau78}[Yau78] S.T. Yau,  On the Ricci curvature of a compact K\"ahler manifold
and  the complex Monge-Amp\`ere equation, I, Comm. Pure Appl.
Math., \textbf{31} (1978), 339--411.

\bibitem{Yau93}[Yau93] Yau, S.T.,   Open problem  in geometry.   Differential geometry: partial differential equations on manifolds (Los Angles,  CA, 1990), 1-28, Proc. Sympos. Pure Math., 54, Part 1, Amer.
Math. Soc., Providence, RI,  1993.

\bibitem{ZhaZ13}[ZhaZ13]  Zhang, X. and Zhang X.W.,    Generalized K\"ahler-Einstein metrics and energy functionals, Canadian J. Math.,  doi:10.4153/CJM-2013-034-3.

\end{enumerate}

\end{document}